\def\Om{\Omega}
\def\ov{\overline}
\def\n{\noindent}
\def\cn{\mathbb C^n}
\def\ve{\varepsilon}
\def\nhd{neighbourhood}
\def\si{\sigma}
\def\I{\mathcal I}
 \def\la{\lambda}
\def\al{\alpha}
\def\de{\delta}
\def\F{\mathcal F}
\def\ov{\overline}
\def\De{\Delta}
\def\va{\varphi}
\def\va{\varphi}
\def\und{\underset}
\def\C{\mathbb C}
\newtheorem{theorem}{Theorem}[section]
\newtheorem{cor}[theorem]{Corollary}
\newtheorem{lemma}[theorem]{Lemma}
\newtheorem{prop}[theorem]{Proposition}
\newtheorem{defn}[theorem]{Definition}
\newenvironment{proof*}{\vskip 2mm\noindent {}}{\hfill $\Box$ \vskip 2mm}
\begin{document}

\title{Convergence of multipole Green functions}
%\vskip0,2cm
\author {Nguyen Quang Dieu and Pascal Thomas}
\address{Nguyen Quang Dieu\\
HaNoi National University of Education\\
136 Xuan Thuy, Cau Giay, Ha Noi, Viet Nam}
\email{ngquang.dieu@hnue.edu.vn}
\address{Pascal J. Thomas\\
Universit\'e de Toulouse\\ UPS, INSA, UT1, UTM \\
Institut de Math\'e\-matiques de Toulouse\\
F-31062 Toulouse, France} \email{pascal.thomas@math.univ-toulouse.fr}
\thanks {This work was essentially done during visits of the first named author at Institute of Mathematics Toulouse in the summers of 2013 and 2014, and of the second named author
at the Viet Nam Institute for Advanced Study in Mathematics in May 2013.
Both author wish to thank the host institutions and the CNRS's Laboratoire 
International Associ\'e ``Formath Vietnam" for the financial support that they received.
This work is supported by the grant 101.02-2013.11 from the NAFOSTED program.
\\
{\it  2010 Mathematics Subject Classification} 32U35, 32U20. }
\maketitle

\begin{abstract}
We continue the study of convergence of multipole pluricomplex Green functions 
for a bounded hyperconvex domain of $\mathbb C^n$, in the case where 
poles collide.  We consider the case where all poles do not converge to the same
point in the domain, and some of them might go to the boundary of the domain.  
We prove that weak convergence will imply convergence in capacity; that
it implies convergence uniformly on compacta away from the poles when
no poles tend to the boundary; and that the study can be reduced, in a sense,
to the case where poles tend to a single point.  Furthermore, we prove that
the limits of Green functions can be obtained as limits of functions of the type
$\max_{1\le i\le 3n} \frac{1}{p} \log |f_i|$, where the $f_i$ are holomorphic functions. 
\end{abstract}

%\footnote{{\it  2010 Mathematics Subject Classification} 32U35, 32U20.}

\section{Introduction}
%\vskip0,4cm
%\n
Let $\Om$ be a bounded domain in $\C^n$. We say that $\Om$ is {\it hyperconvex} if its admits a negative continuous plurisubharmonic exhaustion function. A \emph{maximal} plurisubharmonic $g$
function on a domain in $\C^n$ is one that, on a small ball in the domain, lies above any plurisubharmonic 
function that it dominates on the boundary of that ball.  Equivalently,
in the case where $g$ is locally bounded,
$(dd^c g)^n=0$, where $(dd^c)^n$ is  the (complex) Monge-Amp\`ere operator \cite{Lem}, 
\cite{Be-Ta}.

Of course the Monge-Amp\`ere operator, which potentially involves products of
distributions, cannot be defined for an arbitrary locally integrable function.
Bedford and Taylor \cite{Be-Ta} gave a definition for locally bounded plurisubharmonic functions.
Demailly \cite{De1} extended this to plurisubharmonic functions
locally bounded outside of a relatively compact set. We  recall below an important class of plurisubharmonic functions introduced by Cegrell \cite{Ce} on which 
the Monge-Amp\`ere operator behaves nicely.

\begin{defn} \label{Cegrell's class}
Let $\Om$ be a bounded hyperconvex domain in $\cn$. We define $\mathcal E_0 (\Om)$ to be the class of bounded plurisubharmonic functions $u$ on $\Om$ such that $\lim_{z \to \partial \Om} u(z)=0$ and $\int_{\Om} (dd^c u)^n<\infty.$
More generally, $\mathcal F(\Om)$ is the set of plurisubharmonic functions $u$ on $\Om$ such that there exists a sequence $u_j \in \mathcal E_0 (\Om)$ satisfying $u_j \downarrow u$ and $\sup_{j \ge 1} (dd^c u_j)^n<\infty.$
\end{defn}

The Monge-Amp\`ere operator is well defined on $\F(\Om)$ and enjoys basic properties like continuity under monotone sequences, comparison principle, etc.

Green functions on a hyperconvex domain $\Om$ are fundamental solutions of the (complex) Monge-Amp\`ere operator $(dd^c)^n$, i.e. functions $G_a$ such that $(dd^cG_a)^n=\delta_a$, the
Dirac mass at $a\in \Om$,
with zero boundary values; when $\Om$ is
hyperconvex, they are continuous up to the boundary \cite{De1}, \cite{Le}. Since the operator is non-linear when $n \ge 2$, if we want $(dd^cG)^n$ to be a sum of Dirac masses, we cannot
add up Green functions. A function $G$ as above is called multipole Green function, and its
study, initiated by Lelong \cite{Le}, is more delicate.

Let $S:=\{a_1, \dots, a_N\}$ be a finite subset of $\Om.$ The Green function of $\Om$  with the pole set $S$ is defined as follows \cite{Le}:
 $$G_{\Om, S} (z):=\sup \{u(z): u \in PSH^{-} (\Om), u(z)\le \log \vert z-a\vert +O(1),  \forall a \in S\},$$
 where $PSH^{-} (\Om)$ stands for the set of all nonpositive plurisubharmonic functions
 on $\Om$. 
 Multipole Green functions belong to Cegrell's class $\F(\Om)$.

A multipole Green function depends continuously on its poles provided they do not collide. The following result is due to Blocki. In the case of a single pole, this  had been proved by Demailly \cite{De1}.
\begin{prop} \label{blocki}
 If $\Om$ is hyperconvex then the map $(z, p_1, \dots, p_k) \mapsto G_{\Om, (p_1, \dots, p_k)} (z)$  is continuous as  a function defined on the set $\{\ov{\Om} \times \Om^k: z \ne p_j \ne p_k\}.$
 \end{prop}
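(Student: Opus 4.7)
The plan is to prove joint continuity at a configuration $(z_0, P^0) = (z_0, p_1^0, \dots, p_k^0)$ with pairwise distinct entries (with $z_0 \in \ov\Om$, $p_j^0 \in \Om$) by establishing upper and lower semi-continuity of $G_{\Om, P}(z)$ separately. Fix sequences $z_n \to z_0$ and $P_n = (p_1^n, \dots, p_k^n) \to P^0$, each $P_n$ with distinct entries, and set $G_n := G_{\Om, P_n}$, $G := G_{\Om, P^0}$. The family $\{G_n\}$ is uniformly bounded above by $0$ and, on any compact subset of $\ov\Om \setminus P^0$, uniformly bounded below for $n$ large via the inequality $G_n \ge \sum_j g_{\Om, p_j^n}$ combined with Demailly's single-pole continuity; this yields $L^1_{loc}$-compactness.

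For upper semi-continuity I would form $\varphi := (\limsup_n G_n)^*$, which is psh and $\le 0$ on $\Om$. On a fixed small ball $B(p_j^0, 2r_j) \Subset \Om$ disjoint from $z_0$ and the other $p_\ell^0$, comparison with the single-pole Green function of that ball gives the uniform bound $G_n(z) \le \log|z - p_j^n| + c_j$, with $c_j$ depending only on $r_j$ and the distance to $\partial\Om$. Passing to the upper regularization yields $\varphi(z) \le \log|z - p_j^0| + c_j$ on $B(p_j^0, r_j)$, so $\varphi$ is admissible in the Perron envelope defining $G$, giving $\varphi \le G$. The standard diagonal semi-continuity of the regularization then produces $\limsup_n G_n(z_n) \le \varphi(z_0) \le G(z_0)$, using continuity of $G$ at $z_0 \notin P^0$. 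When $z_0 \in \partial\Om$, the claim follows directly from squeezing $G_n$ between $\sum_j g_{\Om, p_j^n}$ (tending to $0$ at $\partial\Om$ by hyperconvexity) and $0$.

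For lower semi-continuity I would pass to a subsequence of $\{G_n\}$ convergent in $L^1_{loc}(\Om)$ to some $G^* \in PSH^-(\Om) \cap \F(\Om)$, satisfying $G^* \le G$ by the upper bound above. To establish $G^* \ge G$, I would compute the Monge-Amp\`ere measure of $G^*$: since $(dd^c G_n)^n = (2\pi)^n \sum_j \delta_{p_j^n}$ converges weakly to $(dd^c G)^n$, and continuity of the Monge-Amp\`ere operator on $\F(\Om)$ along a suitable sub-subsequence (for instance one converging in capacity) gives $(dd^c G^*)^n = (dd^c G)^n$. Cegrell's uniqueness for the Monge-Amp\`ere Dirichlet problem in $\F(\Om)$ with given right-hand side and zero boundary values then forces $G^* = G$, so the entire sequence $G_n \to G$ in $L^1_{loc}$. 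On any compact subset of $\ov\Om \setminus P^0$, where $\{G_n\}$ is uniformly bounded and $G$ is continuous, $L^1_{loc}$-convergence upgrades to convergence in capacity and hence to locally uniform convergence, yielding $G_n(z_n) \to G(z_0)$.

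The main obstacle I anticipate is the weak-convergence step $(dd^c G_n)^n \to (dd^c G^*)^n$: in Cegrell's theory the Monge-Amp\`ere operator is continuous only along monotone or capacity-convergent sequences, so one must leverage the shared structure of multipole Green functions---being decreasing limits of functions in $\mathcal E_0(\Om)$ with a uniform Monge-Amp\`ere mass bound---to extract a capacity-convergent sub-subsequence. The natural alternative of constructing explicit candidates $u_n$ for $G_n$ by shifting the log singularities of $G$ from $p_j^0$ to $p_j^n$ runs into structural obstructions: ``max'' gluings extinguish the desired log singularity at $p_j^n$ (since $G$ is finite there and dominates the log term locally), while ``replacement'' gluings of a Perron envelope on a ball around $p_j^0$ with $G$ outside are not automatically plurisubharmonic across the interface. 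Hence routing through Cegrell-uniqueness appears to be the cleanest path.
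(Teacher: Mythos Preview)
The paper does not prove Proposition~\ref{blocki}; it is quoted as a known result of B\l ocki (reference~[Bl]) and only used later, e.g.\ in the proof of Lemma~\ref{variation of domain}. So there is no ``paper's own proof'' to compare against; the benchmark is B\l ocki's original argument, which proceeds by explicit barrier constructions and comparison, not through Monge--Amp\`ere measures and Cegrell uniqueness.

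On the substance of your attempt: the upper semi-continuity half is essentially sound. The lower semi-continuity half has the gap you yourself flag, and it is a real one. From $G_n\to G^*$ in $L^1_{loc}$ you cannot conclude $(dd^c G_n)^n\to (dd^c G^*)^n$ weakly; Cegrell--Xing continuity of the Monge--Amp\`ere operator requires monotone or capacity convergence, and you have neither a priori. Your suggestion to ``extract a capacity-convergent sub-subsequence'' is circular: the only general route to capacity convergence here is through exactly the kind of uniform control that the proposition is meant to supply. A second issue is your appeal to ``Cegrell's uniqueness for the Dirichlet problem in $\mathcal F(\Omega)$'': uniqueness in $\mathcal F(\Omega)$ is known for measures that do not charge pluripolar sets, whereas here $(dd^c G)^n$ is a finite sum of Dirac masses. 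With the ordering $G^*\le G$ already in hand you could instead use a comparison inequality of the type in Lemma~\ref{pluripotential lemma}(b), but that still presupposes you know $(dd^c G^*)^n$, which brings you back to the first gap. Finally, the closing step ``$L^1_{loc}$ convergence upgrades to convergence in capacity and hence to locally uniform convergence'' is not a standard implication either; for plurisubharmonic sequences, Hartogs' lemma gives only the upper half of uniform convergence.

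B\l ocki's proof avoids all of this by working directly with the envelope definition and constructing, for each candidate $u$ for $G_{\Omega,P^0}$, a nearby candidate for $G_{\Omega,P_n}$ via explicit local modifications using the exhaustion function of $\Omega$; the obstruction you note about naive max/replacement gluings is overcome there by a careful two-scale construction. If you want to stay within the potential-theoretic framework, the missing ingredient is an \emph{a priori} argument that the family $\{G_{\Omega,P}\}$ is equicontinuous on compacta of $\overline\Omega\setminus\{p_1^0,\dots,p_k^0\}$ as $P$ varies near $P^0$; this is in effect what Lemma~\ref{uniform continuity} of the present paper establishes, but note that its proof already \emph{uses} Proposition~\ref{blocki}.
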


We are interested in what may happen to limits of sequences of multipole
Green functions.  When poles collide, new singularities will arise, 
generalizing the concept of multiple poles, see \cite{Ma-Ra-Si-Th}.
 
 We establish some terminology about convergence of finite sets. 

\begin{defn}
\label{conv}
 Let $\Om$ be a bounded domain in $\cn$ and $N \ge 1$.
 We say that a sequence $\{S_k\}_{k \ge 1}=\{(a_{1, k}, \dots, a_{N, k})\}_{k \ge 1}  \subset \Om^N$ is  \emph{convergent} if $a_{i, k} \ne a_{j, k}$  for every $1\le i <j \le N$ and if
 $S_k$ converges to an element $S \in  {\ov \Om}^N$;  $\{S_k\}$ is called \emph{interior convergent} if $S \in \Om^N$; it is said to be  \emph{boundary convergent} if  $S \in (\partial \Om)^N$.
 \end{defn}

{\bf Remarks.}
\begin{enumerate}[(a)]
\item
Note that coordinates of the limit point of the sequence $\{S_k\}$  are not necessarily distinct.
\item
We denote by $\pi_N$ the projection $\Om^N \to 2^\Om$ defined by $(a_1, \cdots, a_N) \mapsto \{a_1, \cdots, a_N\} \subset \Om.$
 We will drop the subscript $N$ in case there is no confusion.
 \item
 When the coordinates of the $N$-tuple $S$ are distinct points in $\Om$, 
 by a slight abuse of notation, we will write $G_{\pi(S)}=G_S$.
\item
Renumbering the points as needed, every convergent sequence $S_k$ in $\Om^N$ can be partitioned as
 $S_k=(S'_k , S''_k)$, where $S'_k$ (resp. $S''_k$) is a interior convergent (resp. boundary convergent) in $\Om^{N'}$
(resp. $\Om^{N''}$), with $N'+N''=N$.
\end{enumerate}

Several notions of convergence of functions will occur in this paper. We need a definition, 
which originates in \cite{Xi}. 

\begin{defn}
For a Borel subset $E$ of $\Om$,  the \emph{relative capacity} $C(E, \Om)$ is defined
 \cite{Be-Ta} as
  $$C(E, \Om):=\sup\{\int_E (dd^c u)^n: u \in PSH (\Om), -1 <u<0\}.$$
Given a sequence of functions $\{u_k\}$, we say
$u_k \to 0$ \emph{in capacity} on $\Om$ if for every $\ve>0$ and every Borel set  $F \Subset \Om$ we have
 $C(\{z \in F: \vert u_k (z)\vert>\ve\}, \Om)  \to 0$ as $k \to \infty.$
\end{defn}

It is clear that uniform convergence on compacta of $\Omega$ implies convergence in capacity. 
In fact, uniform convergence on compacta of $\Omega \setminus E$, where $E$ is a compact set 
with zero capacity (e.g. a finite set) is enough to imply convergence in capacity.  On the other 
hand, convergence in capacity implies convergence in measure (in the sense of Lebesgue measure).
If a sequence converges in measure and in the $L^1_{loc}$ topology, both limits must coincide.

By weak compactness in the $L^1_{loc}$ topology, we always have limit
points for a sequence of multipole Green functions $G_{S_k}$.  
Previous work \cite{Ra-Th}
gave sufficient algebraic conditions for such convergence. 

In the same paper, \cite[Theorem 3.1]{Ra-Th} proved that for sequences
of Green functions with all poles tending to one point in $\Om$, convergence in 
the $L^1_{loc}$ topology (the weakest possible in a sense) implies the much stronger
uniform convergence on compacta.
The following generalizes \cite[Theorem 3.1]{Ra-Th}.

\begin{theorem}\label{bootstrap's brother}
Let $\Om \subset \cn$ be a bounded hyperconvex domain
and $\{S_k\}_{k \ge 1}$ be a sequence that converges to $S=(s_1, \dots,s_N) \in \ov{\Om}^N$.
Suppose that $G_k:=G_{\Om, S_k}$ converges in $L^1_{loc}$ to a plurisubharmonic function $g$ on $\Om$.
Then the following assertions hold:
\begin{enumerate}[(a)]
\item
$G_k$ converges in capacity to $g$ on $\Om.$
\item
$g$ is continuous and maximal plurisubharmonic on $\Om \setminus \pi (S) $, and
 $\lim_{z \to  \partial \Om} g(z)=0$. 
\item
$(dd^c g)^n=\sum_{a \in \pi (S) \cap \Om} \nu_a \de_a$, where $\nu_a:=\# \{j\in \{1,\dots,N\}: s_j=a\}.$
\end{enumerate}
Furthermore, if $\{S_k\}_{k \ge 1}$ is an interior convergent sequence,
 the convergence is also uniform on compacta of $\ov\Om \setminus \pi(S)$.
\end{theorem}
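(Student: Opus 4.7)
The plan is a bootstrap argument: $L^1_{loc}$ convergence yields (a) convergence in capacity; (a) together with the uniform bound on Monge--Amp\`ere masses gives (c) via Cegrell--Xing continuity; maximality in (b) follows from (c), while the boundary behaviour in (b) comes from explicit lower bounds; and the final uniform convergence assertion is obtained by a localization reducing to the one-limit-point case of \cite[Theorem~3.1]{Ra-Th}. I begin with uniform bounds. The inequality
$$G_k(z)\;\ge\; \sum_{j=1}^N G_{\Om,\{a_{j,k}\}}(z)$$
holds because the right-hand side is a negative plurisubharmonic function on $\Om$ with the correct logarithmic singularities and $G_k$ is the corresponding upper envelope. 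By joint continuity of the single-pole Green function in both arguments (Demailly), and by $G_{\Om,\{a\}}\to 0$ uniformly on compacta of $\ov\Om$ as $a\to\partial\Om$, the right-hand side converges uniformly on every compact $K\subset\ov\Om\setminus\pi(S)$ to $\sum_{s_j\in\Om}G_{\Om,\{s_j\}}$. Hence $\{G_k\}$ is uniformly bounded on such $K$; passing to the limit and using $G_k\le 0$ yields $\lim_{z\to\partial\Om}g(z)=0$, the boundary assertion in (b). In particular $g\in\F(\Om)$ with $\int(dd^cg)^n\le N$.

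For (a) I truncate: $G_k^M:=\max(G_k,-M)$ and $g^M:=\max(g,-M)$ are uniformly bounded psh functions converging in $L^1_{loc}$, so Xing's theorem yields $G_k^M\to g^M$ in capacity. It remains to control the residual sets $\{G_k\le -M\}$ and $\{g\le -M\}$; the standard Chern--Levine--Nirenberg type estimate bounds their relative capacities by a quantity that tends to $0$ as $M\to\infty$ depending only on the total Monge--Amp\`ere mass, which is uniformly $\le N$. This gives (a). Combining (a) with the uniform mass bound, the Cegrell--Xing continuity of the Monge--Amp\`ere operator yields $(dd^cG_k)^n\to(dd^cg)^n$ against compactly supported continuous test functions on $\Om$. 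Since $(dd^cG_k)^n=\sum_{j=1}^N\de_{a_{j,k}}$ and $a_{j,k}\to s_j$, the restriction of the limit measure to $\Om$ is exactly $\sum_{a\in\pi(S)\cap\Om}\nu_a\de_a$, giving (c) and hence the maximality of $g$ on $\Om\setminus\pi(S)$.

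The main obstacle will be continuity of $g$ on $\Om\setminus\pi(S)$ and, in the interior case, uniform convergence of $G_k$ on compacta of $\ov\Om\setminus\pi(S)$. For continuity I would identify $g$ as the unique $\F(\Om)$-solution to the Monge--Amp\`ere Dirichlet problem with data $\sum_{a\in\pi(S)\cap\Om}\nu_a\de_a$ and zero boundary values, and apply Kolodziej-type regularity for finitely supported measures to conclude continuity off $\pi(S)$. For the ``furthermore'' assertion (interior convergence), one direction is cheap: Hartogs' Lemma applied to $\{G_k\}$ with continuous limit $g$ gives $G_k\le g+\ve$ uniformly on any compact $K\subset\ov\Om\setminus\pi(S)$ for $k$ large. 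For the reverse I would localize: for each distinct $b_\ell\in\pi(S)$, fix a small ball $B_\ell\Subset\Om$ isolating only the cluster $C_{k,\ell}\subset S_k$ converging to $b_\ell$; the Bedford--Taylor comparison principle in $\F(B_\ell)$ yields
$$G_k|_{B_\ell}\;\ge\; G_{B_\ell,\,C_{k,\ell}}\;+\;\min_{\partial B_\ell}G_k,$$
and the single-cluster result \cite[Theorem~3.1]{Ra-Th} applied inside $B_\ell$ provides uniform convergence of $G_{B_\ell,C_{k,\ell}}$ on compacta of $\ov B_\ell\setminus\{b_\ell\}$. Propagating uniform convergence from $\bigcup_\ell\partial B_\ell$ to the complement $\Om\setminus\bigcup_\ell B_\ell$ via the comparison principle (all functions involved being maximal there for $k$ large) completes the argument. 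The delicate point is to organize things so as to avoid circularity between the local and global uniform-convergence statements; I expect this to require an induction on the number of clusters together with care about the boundary behaviour of $g$.
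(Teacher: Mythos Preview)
Your plan has two genuine gaps. First, the sentence ``$G_{\Om,\{a\}}\to 0$ uniformly on compacta of $\ov\Om$ as $a\to\partial\Om$'' is precisely the folklore conjecture that the paper flags as open for general hyperconvex domains; you cannot invoke it. (The boundary assertion $\lim_{z\to\partial\Om}g(z)=0$ can be salvaged, but only after one already has uniform convergence of the interior part $G_{\Om,S'_k}$ up to $\partial\Om$.) Second, and more seriously, your argument for continuity of $g$ via ``the unique $\F(\Om)$-solution to the Monge--Amp\`ere Dirichlet problem with data $\sum\nu_a\de_a$'' is wrong in principle: the measure $(dd^cg)^n$ does \emph{not} determine $g$ among $\F(\Om)$ functions with zero boundary values. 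When poles collide, the limit $g$ depends on the direction of approach and is typically strictly more singular than the weighted multipole Green function with the same Monge--Amp\`ere mass; this non-uniqueness is the entire subject of \cite{Ma-Ra-Si-Th}. So no Dirichlet-type regularity theorem can identify $g$ or yield its continuity.

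The paper's route is different and avoids both issues. The central new ingredient is an equicontinuity lemma: for each $z_0\in\Om\setminus\pi(S)$ one constructs, uniformly in $k$, a holomorphic self-map $\Phi_k$ of a slightly smaller domain that fixes every pole in $S_k$ and sends $z'$ to $z''$ for nearby $z',z''$; the decreasing property of Green functions under holomorphic maps then gives $|G_k(z')-G_k(z'')|<\de$. This equicontinuity plus $L^1_{loc}$ convergence forces uniform convergence of $G_{\Om,S'_k}$ on compacta of $\Om\setminus\pi(S)$ directly, with no appeal to \cite[Theorem~3.1]{Ra-Th} and no induction on clusters; continuity of $g$ is then immediate, and (a) follows from the sandwich $G_{\Om,S''_k}+G_{\Om,S'_k}\le G_k\le G_{\Om,S'_k}$ together with capacity convergence of the boundary part. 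Your localization-to-single-cluster idea is natural, but as you yourself note, it needs the $L^1_{loc}$ convergence of each local Green function $G_{B_\ell,C_{k,\ell}}$, which is not a consequence of the global hypothesis; the holomorphic-map trick sidesteps this circularity entirely.
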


%This
%is obtained in Lemma \ref{equicontinuity}.

The special case of boundary convergent sequences is simpler, in that no assumption
is needed on the convergence of the Green functions. As an immediate consequence
of \cite[Theorem 3.5]{Kh-Hi}, we have:
\begin{prop} \label{capacity convergence}
Let $\Om \subset \cn$ be a bounded hyperconvex domain. 
 Let $\{u_k\} \subset \mathcal F(\Om)$ be a sequence  satisfying the following properties:
\begin{enumerate}[(a)]
\item
 $\sup_{k \ge 1} \int_\Om (dd^c u_k)^n <\infty$;
\item
$\int_E (dd^c u_k)^n \to 0$ for every Borel  set $E \Subset \Om$ as $k \to \infty.$
\end{enumerate}
 Then $u_k \to 0$ in capacity on $\Om$.
  \end{prop}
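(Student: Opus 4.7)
The plan is to invoke [Kh-Hi, Theorem 3.5] directly with the candidate limit $u \equiv 0$, which lies trivially in $\mathcal F(\Om)$ (take the constant sequence $u_j \equiv 0 \in \mathcal E_0(\Om)$ in Definition \ref{Cegrell's class}) and satisfies $(dd^c u)^n = 0$. The desired conclusion then reads: $u_k \to u \equiv 0$ in capacity on $\Om$.

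To apply [Kh-Hi, Theorem 3.5] one verifies two hypotheses. First, the sequence $\{u_k\}$ should have uniformly bounded total Monge--Amp\`ere mass; this is precisely hypothesis (a). Second, the measures $(dd^c u_k)^n$ should converge to $(dd^c u)^n \equiv 0$ on every relatively compact Borel subset of $\Om$; this is exactly hypothesis (b). Both hypotheses hold as stated, and the cited theorem then delivers the conclusion immediately.

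The only subtlety that I anticipate is whether the formulation of [Kh-Hi, Theorem 3.5] requires an a priori $L^1_{loc}$-convergence of the potentials $u_k$ to the candidate limit $u$, in addition to (a) and (b). If so, one first notes that (a) together with $u_k \le 0$ yields relative compactness of $\{u_k\}$ in $L^1_{loc}(\Om)$. Any subsequential $L^1_{loc}$-limit $u_\infty$ lies in $\mathcal F(\Om)$ (the total mass bound passes to the limit) and satisfies $(dd^c u_\infty)^n \equiv 0$ by (b) combined with the continuity of the Monge--Amp\`ere operator along decreasing sequences; hence $u_\infty$ is a maximal plurisubharmonic function in $\mathcal F(\Om)$, so the comparison principle for the class $\mathcal F(\Om)$ forces $u_\infty \equiv 0$. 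The limit is therefore unique, the whole sequence $\{u_k\}$ converges in $L^1_{loc}$ to $0$, and the hypotheses of [Kh-Hi, Theorem 3.5] are met.

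If one preferred a self-contained argument avoiding the citation, the main obstacle would be to prove a localised Chebyshev-type estimate of the form
$$
\varepsilon^n \, C\bigl(\{z \in F : u_k(z) < -\varepsilon\},\, \Om\bigr) \;\le\; \kappa(F, F') \int_{F'} (dd^c u_k)^n,
$$
valid for $F \Subset F' \Subset \Om$ and a constant $\kappa(F,F')$ independent of $k$. Such an estimate is obtained by comparing $u_k$ against a multiple $M\rho$ of a negative continuous plurisubharmonic exhaustion $\rho$ of $\Om$, and then applying the classical inequality $t^n C(\{v<-t\},\Om) \le \int_\Om (dd^c v)^n$ for bounded negative plurisubharmonic $v$. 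Combined with (b) applied to $E=F'$, this would force the right-hand side, hence the capacity on the left, to tend to $0$.
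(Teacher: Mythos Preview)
Your proposal is correct and matches the paper exactly: the authors state the proposition as ``an immediate consequence of \cite[Theorem 3.5]{Kh-Hi}'' with no further argument, which is precisely your paragraphs one and two. Your anticipated subtlety in paragraph three is therefore unnecessary (and the step invoking ``continuity of the Monge--Amp\`ere operator along decreasing sequences'' for an $L^1_{loc}$ subsequential limit would need more care, since such a limit is not obtained via a monotone sequence), but since the cited theorem applies directly this does not affect the validity of your main argument.
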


Applying this to $u_k=G_{S_k}$, where $ \int_\Om (dd^c u_k)^n =N$ for all $k$, we have:
 \begin{cor}
   \label{bdrycv}
  Let $\Om \subset \cn$ be a bounded hyperconvex domain and
   $\{S_k\}_{k \ge 1}  \subset \Om^N$ a boundary convergent sequence. 
  Then $G_{S_k}$ converges to $0$ in capacity.
 \end{cor}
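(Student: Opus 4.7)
The plan is to apply Proposition \ref{capacity convergence} directly with $u_k := G_{S_k}$; the proof then reduces to checking the two hypotheses (a) and (b) of that proposition for this sequence, as the text already suggests.

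For hypothesis (a), I would recall the classical fact that if $S_k = \{a_{1,k}, \ldots, a_{N,k}\}$ consists of $N$ distinct points of $\Om$, then the multipole Green function lies in $\F(\Om)$ and satisfies $(dd^c G_{S_k})^n = \sum_{i=1}^N \de_{a_{i,k}}$. In particular $\int_\Om (dd^c G_{S_k})^n = N$ for every $k$, which is the required uniform bound. The distinctness of the poles at each fixed $k$ is guaranteed by Definition \ref{conv}.

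For hypothesis (b), fix a Borel set $E \Subset \Om$. Because $\{S_k\}$ is boundary convergent, each coordinate $a_{i,k}$ tends to a point of $\partial \Om$, whereas $E$ has strictly positive distance from $\partial \Om$. Therefore, for all sufficiently large $k$, none of the poles $a_{1,k}, \ldots, a_{N,k}$ lies in $E$, and the mass formula above gives $\int_E (dd^c G_{S_k})^n = 0$ for such $k$. Hence the integral in (b) is eventually zero and certainly tends to $0$.

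With both hypotheses verified, Proposition \ref{capacity convergence} yields $G_{S_k} \to 0$ in capacity on $\Om$. There is no substantive obstacle here beyond recalling the Monge-Amp\`ere mass formula for multipole Green functions with distinct simple poles; the argument is essentially a direct citation, which is presumably why the authors present it as an immediate corollary rather than a theorem in its own right.
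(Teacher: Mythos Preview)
Your proposal is correct and follows exactly the paper's approach: apply Proposition~\ref{capacity convergence} with $u_k = G_{S_k}$, using the mass formula $(dd^c G_{S_k})^n = \sum_i \de_{a_{i,k}}$ to verify both hypotheses. The paper only spells out the check of~(a) (namely $\int_\Om (dd^c u_k)^n = N$), leaving~(b) implicit, so your write-up is if anything slightly more detailed than the original.
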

 
 In connection with Corollary \ref{bdrycv}, the following question arises naturally: 
if 
$S_k =(s_{1, k}, \cdots, s_{N, k}) \subset \Om^N$ 
tends to $S =(s_1, \cdots, s_N) \in (\partial \Omega)^N$,
when does $G_{\Om, S_k}$ converge uniformly on compact sets of $\Om$ ? 
Then the conclusions of Theorem \ref{bootstrap's brother}
 and Corollary \ref{criterion} could be strengthened
to convergence on compacta of $\Om\setminus S$ instead of convergence
in capacity. 
By the rough estimate
$$
G_{\Om, S_k} \ge G_{\Om, s_{1,k}}+\cdots+G_{\Om, s_{N, k}},$$
it suffices to consider the case $N=1$.

There is a folklore conjecture that the answer is always positive when $\Om$ is bounded hyperconvex. Nevertheless, it is only proved under the assumptions that $\Om$ admits a negative plurisubharmonic  exhaustion function which is either H\"older continuous \cite{He1} or 
satisfies certain mild conditions about the growth near the boundary \cite{He2}.

A closely related problem is about uniform convergence of $G_{\Om, S_k}$ 
on compacta of $\overline{\Om} \setminus S.$
This question has been studied by Coman (for $N=1$, which is enough).
He shows \cite[Theorem 5]{Co} that if $\Omega$ admits a plurisubharmonic peaking function at the cluster point $s \in \partial \Om$ which is also H\"older continuous near $s$, then $G_{\Om, S_k}$ does converge uniformly on compact sets of $\overline{\Om} \setminus \{s\}$. In particular, this is true at every point $s \in \partial \Om$ if $\Omega$ 
is sufficiently regular (e.g. a real-analytic pseudoconvex domain).

 In the general case, 
 we can apply Theorem \ref{bootstrap's brother}  to give a characterization for convergence in capacity of the sequence $G_{\Om, S_k}.$
\begin{cor} 
\label{criterion}
Let $\Om$ be a bounded hyperconvex domain in $\cn$ and $\{S_k\}_{k \ge 1}$
be a sequence  that converges to $S
=(s_1,\dots,s_N) \in {\ov \Om}^N$. Let
$$G:=(\lim \sup_{k \to \infty} G_{\Om, S_k})^*.$$
Assume that
$$(dd^c G)^n (\pi (S) \cap \Om) \ge \# \{j: s_j \in \Om\}.$$
Then $G_{\Om, S_k}$ converges in capacity to $G$ on $\Om$ as $k \to \infty$.

If $S_k$ is interior convergent, the convergence is locally uniform
on $\ov\Om \setminus \pi (S)$.
\end{cor}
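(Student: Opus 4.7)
The plan is to show that the sequence $\{G_{S_k}\}$ converges to $G$ in $L^1_{loc}(\Om)$; once established, Theorem \ref{bootstrap's brother} applied to the full sequence will yield the desired convergence in capacity, as well as locally uniform convergence on compacta of $\overline\Om \setminus \pi(S)$ in the interior convergent case. To achieve this, I will argue that every $L^1_{loc}$-convergent subsequence has the same limit $G$.

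First, $\{G_{S_k}\}$ is $L^1_{loc}$-bounded: each term is non-positive and bounded below by $\sum_j G_{\Om, s_{j,k}}$, which is locally uniformly bounded in $k$ by Proposition \ref{blocki} together with Corollary \ref{bdrycv}. By weak compactness, any subsequence admits a further subsequence $G_{S_{k_m}} \to g$ in $L^1_{loc}$ with $g$ plurisubharmonic. Applying Theorem \ref{bootstrap's brother} to this subsequence gives $g \in \F(\Om)$, maximal on $\Om\setminus\pi(S)$, with zero boundary values and
\[
(dd^c g)^n = \sum_{a\in\pi(S)\cap\Om} \nu_a\,\delta_a, \qquad \int_\Om (dd^c g)^n = N':=\#\{j:s_j\in\Om\}.
\]
Since $G=(\limsup_k G_{S_k})^*$ is taken over the full sequence, $g \le G$ everywhere on $\Om$; sandwiched between $g \in \F$ and $0$, the function $G$ itself lies in $\F(\Om)$.

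The next step is to identify $g = G$. The comparison principle in $\F$ applied to $g \le G$ yields
\[
\int_{\{g<G\}} (dd^c G)^n \le \int_{\{g<G\}} (dd^c g)^n = 0,
\]
the right side vanishing because $(dd^c g)^n$ is carried on $\pi(S)\cap\Om\subset\{g=G=-\infty\}$. Hence $(dd^c G)^n$ is concentrated on $\pi(S)\cap\Om$. Cegrell's monotonicity of total Monge-Amp\`ere mass under pointwise domination gives $\int_\Om (dd^c G)^n \le N'$; combined with the hypothesis $(dd^c G)^n(\pi(S)\cap\Om)\ge N'$, the total mass equals exactly $N'$ and is entirely supported on $\pi(S)\cap\Om$. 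In particular, $G$ is maximal on $\Om\setminus\pi(S)$ with zero boundary values, just like $g$. A uniqueness theorem for Monge-Amp\`ere in $\F$, combined with the domination $g\le G$ and matching total masses, then forces $G=g$. Uniqueness of the subsequential limit implies $G_{S_k}\to g=G$ in $L^1_{loc}(\Om)$, and Theorem \ref{bootstrap's brother} applied to the full sequence completes the proof.

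I expect the hardest step to be the identification $g=G$: the comparison principle easily pins down the support and total mass of $(dd^c G)^n$, but matching the pole-by-pole masses with the $\nu_a$, so as to invoke Cegrell's uniqueness for prescribed atomic Monge-Amp\`ere data, is delicate. One plausible route combines Demailly's inequality $(dd^c G)^n(\{a\})\ge\nu(G,a)^n$ with Lelong-number bounds $\nu(G,a)\ge 1$ coming from $G\le G_{\Om,a}$ and $\nu(G,a)\le\nu(g,a)$ coming from $g\le G$; alternatively, one may argue directly via a Dirichlet-uniqueness principle for two maximal plurisubharmonic functions in $\F$ with the same total Monge-Amp\`ere mass concentrated at $\pi(S)\cap\Om$ and satisfying $g\le G$.
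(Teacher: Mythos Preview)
Your overall strategy matches the paper's: take an arbitrary $L^1_{loc}$ subsequential limit $g$ of $\{G_{S_k}\}$, prove $g=G$, and then conclude via Theorem~\ref{bootstrap's brother}. The difference lies in how $g=G$ is obtained.

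The paper argues by contradiction in a single stroke. Assuming $G(z_0)>g(z_0)$ for some $z_0\notin\pi(S)$, it constructs a test function $w\in\PSH(\Om)$ with $-1\le w<0$, $w\equiv-1$ on a union $X_\delta$ of small closed balls around the points of $\pi(S)\cap\Om$ (this requires polynomial convexity of $X_\delta$, established by projecting to a suitable complex line and invoking Kallin's lemma), and $w$ strictly plurisubharmonic on a ball around $z_0$. Then Lemma~\ref{pluripotential lemma}(b) applied to $u=g\le v=G$ gives
\[
\int_\Om (G-g)^n\,(dd^c w)^n \;\le\; \int_\Om (-w)\bigl[(dd^c g)^n-(dd^c G)^n\bigr]
\;\le\; (dd^c g)^n(\pi(S))-(dd^c G)^n(\pi(S))\;\le\;0,
\]
the middle inequality using that $w=-1$ on $X_\delta\supset\pi(S)$, $w\le 0$ elsewhere, and $(dd^c g)^n$ is supported on $\pi(S)$; the last inequality is exactly the hypothesis. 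Strict plurisubharmonicity of $w$ near $z_0$ then forces $G=g$ there, a contradiction.

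Your route instead first establishes $(dd^c G)^n=(dd^c g)^n$ (total mass via Cegrell monotonicity combined with the hypothesis; pointwise masses via Lemma~\ref{pluripotential lemma}(a)) and only then appeals to a uniqueness principle. That principle is again precisely Lemma~\ref{pluripotential lemma}(b), now with an arbitrary strictly plurisubharmonic test $w$, so both arguments rest on the same key tool; yours simply takes a longer detour to reach it. Two remarks on your write-up: the comparison-principle display is superfluous, since the mass argument by itself already gives the support of $(dd^c G)^n$, and the sentence ``Hence $(dd^c G)^n$ is concentrated on $\pi(S)\cap\Om$'' does not follow from the comparison principle alone. Also, your proposed Lelong-number route is shaky: Demailly's inequality yields only $(dd^c G)^n(\{a\})\ge\nu(G,a)^n\ge 1$, not $\ge\nu_a$ when $\nu_a>1$, so for the pole-by-pole matching you should rely on Lemma~\ref{pluripotential lemma}(a) rather than Lelong numbers.
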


Finally, for interior convergent sequences,
we can reduce the study of convergence of multipole Green functions
to what happens in the case of a single limit point for the poles by using systematically
the next result, which shows how we can break up the limit set $\pi(S)$ into
smaller pieces.

\begin{prop}
\label{split}
Let $\Om \subset \cn$ be a bounded hyperconvex domain and $\{S_k\}$ be an
interior convergent sequence in $\Om^N$. Assume that $S_k=(S'_k , S''_k)$
  where $\{S'_k\}_{k \ge 1}$ and $\{S''_k\}_{k \ge 1}$  are interior convergent sequences of 
 $\Om^{N'}$ and $\Om^{N''}$ respectively ($N'+N''=N$).
  Suppose that $S'_k \to a' \in \Om^{N'}, {S''}_k \to a'' \in \Om^{N''}$,
  and that $\pi_{N'}(a')\cap \pi_{N''}(a'')=\emptyset$. Then the following statements are equivalent:
\begin{enumerate}[(a)]
\item 
$G_{\Om, S_k}$ converges in $L^1_{loc}$  on $\Om \setminus \pi_N (S)$.
\item 
$G_{\Om, S_k}$ converges locally uniformly on $\Om \setminus \pi_N (S)$.
\item 
The two sequences  $G_{\Om, S'_k}$ and $G_{\Om, S''_k}$ converges in $L^1_{loc}$ on $\Om \setminus \pi_{N'} (S')$ and $\Om \setminus \pi_{N''} (S'')$ respectively.
\item 
The two sequences  $G_{\Om, S'_k}$ and $G_{\Om, S''_k}$ converges locally uniformly on $\Om \setminus \pi_{N'} (S')$ and $\Om \setminus \pi_{N''} (S'')$ respectively.
\end{enumerate}
Furthermore, when convergence occurs, if we write 
$g:=\lim_k G_{\Om, S_k}$, $g':=\lim_k G_{\Om, S'_k}$, 
 $g'':=\lim_k G_{\Om, S''_k}$, then we have the following relation:
 $$
 g=\sup\left\{ u\in PSH^-(\Om): u\le g'+O(1), u\le g''+O(1)\right\}.
 $$
\end{prop}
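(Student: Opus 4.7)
The plan is to reduce everything to Theorem \ref{bootstrap's brother} and the sup formula, using the latter to supply the uniqueness needed for the non-trivial equivalences. Directly, (a)$\Leftrightarrow$(b) and (c)$\Leftrightarrow$(d) are immediate from the last assertion of Theorem \ref{bootstrap's brother} applied to $\{S_k\}$ and to each of $\{S'_k\},\{S''_k\}$, so it remains to prove (b)$\Leftrightarrow$(d) together with the sup formula.

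For (d)$\Rightarrow$(b) I would exploit the two-sided bound
$$
G_{\Om,S'_k}+G_{\Om,S''_k}\ \le\ G_{\Om,S_k}\ \le\ \min(G_{\Om,S'_k},G_{\Om,S''_k}),
$$
the lower bound being the standard Green-function estimate already cited before this proposition and the upper bound following from monotonicity of $G$ under adjoining poles. By the disjointness $\pi_{N'}(a')\cap\pi_{N''}(a'')=\emptyset$ together with (d), both ends converge locally uniformly on compacta of $\Om\setminus\pi_N(S)$, hence $\{G_{\Om,S_k}\}$ is locally uniformly bounded there and precompact in $L^1_{loc}$. Uniqueness of the limit then follows from the sup formula: every $L^1_{loc}$ subsequential limit must equal $\sup\{u\in PSH^-(\Om):u\le g'+O(1),u\le g''+O(1)\}$, which depends only on $g'$ and $g''$.

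To prove the sup formula, write $\tilde g$ for its right-hand side. The inequality $g\le \tilde g$ is obtained by passing to the limit in $G_{\Om,S_k}\le G_{\Om,S'_k}$ and $G_{\Om,S_k}\le G_{\Om,S''_k}$: these yield $g\le g'$ and $g\le g''$, so $g$ itself competes. For $\tilde g\le g$, given a competitor $u\in PSH^-(\Om)$ with $u\le g'+C_1$ and $u\le g''+C_2$, I would construct for each $\epsilon>0$ and each large $k$ a PSH function $v_{k,\epsilon}\le 0$ that (i) has full log singularity $\log|z-s|+O_k(1)$ at every pole $s\in S_k$, so that $v_{k,\epsilon}\le G_{\Om,S_k}$ by the Perron definition, and (ii) satisfies $v_{k,\epsilon}\ge u-\epsilon$ on any preassigned compact $K\Subset\Om\setminus\pi_N(S)$. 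The construction is a local $\max$-gluing: on small balls around each moving pole $s_{i,k}$ one replaces $u$ by $\max(u,\log|z-s_{i,k}|-M_k)$ with $M_k$ chosen so that the two pieces agree near the boundary of the ball, yielding a globally PSH function with the required log singularity at $s_{i,k}$, while on $K$ one keeps $u$ itself. The bound $u\le g'+C_1$, combined with Hartogs' lemma applied to $G_{\Om,S'_k}\to g'$, keeps the gluing constants uniform so that on $K$ the modification differs from $u$ by at most $\epsilon$; the disjointness hypothesis decouples the pasting near $\pi_{N'}(a')$ from that near $\pi_{N''}(a'')$. Letting $k\to\infty$ and then $\epsilon\to 0$ yields $u\le g$.

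Finally, for (b)$\Rightarrow$(d), since $\int_\Om(dd^c G_{\Om,S'_k})^n=N'$ and similarly for $S''_k$ are uniformly bounded, standard compactness in Cegrell's class $\F(\Om)$ gives $L^1_{loc}$ subsequential limits $\tilde g',\tilde g''$ along a common subsequence, and (d)$\Rightarrow$(b) with the sup formula then forces $g=\sup\{u\in PSH^-(\Om):u\le \tilde g'+O(1),u\le \tilde g''+O(1)\}$. To pin down $\tilde g'$ and $\tilde g''$ subsequence-independently, one restricts to a neighborhood $W'\Subset\Om\setminus\pi_{N''}(a'')$ of $\pi_{N'}(a')$: on $\overline{W'}$ the sequence $G_{\Om,S''_k}$ is pole-free and, by Theorem \ref{bootstrap's brother}, converges uniformly to the unique maximal PSH extension with boundary data dictated by $g$ and $\tilde g'$, and symmetrically near $\pi_{N''}(a'')$. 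The main obstacle throughout is the construction of $v_{k,\epsilon}$: naive linear combinations such as $(1-\epsilon)u+\epsilon(G_{\Om,S'_k}+G_{\Om,S''_k})$ only produce $\epsilon$-log singularities at each moving pole and so cannot lie in the Perron family of $G_{\Om,S_k}$, making the local $\max$-gluing---together with its attendant uniform PSH patching estimates---unavoidable.
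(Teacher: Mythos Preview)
Your gluing construction for $v_{k,\epsilon}$ does not do what you claim. Taking $\max(u,\log|z-s_{i,k}|-M_k)$ on a small ball produces a function that is \emph{at least} $u$ near $s_{i,k}$; if $u$ happens to be bounded there (which nothing prevents, since $u\le g'+C_1$ only forces a singularity at the limit point $a\in\pi_{N'}(a')$, not at the moving pole $s_{i,k}$), the max equals $u$ near $s_{i,k}$ and has no logarithmic singularity at all. Hence $v_{k,\epsilon}$ is not in the Perron family for $G_{\Om,S_k}$ and the inequality $v_{k,\epsilon}\le G_{\Om,S_k}$ fails. The gluing condition you impose ($M_k$ large so the pieces agree on $\partial B$) also requires a lower bound for $u$ on the sphere, which a general competitor need not satisfy. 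Your remark that the convex combination $(1-\epsilon)u+\epsilon(G_{\Om,S'_k}+G_{\Om,S''_k})$ only gives an $\epsilon$-log pole is correct, but the max construction you propose instead is worse, not better.

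The paper avoids this entirely. For $\tilde g\le g$ it passes to the limit in the \emph{lower} bound $G_{\Om,S'_k}+G_{\Om,S''_k}\le G_{\Om,S_k}$ to get $g\ge g'+g''$; since $g''$ is bounded near $\pi_{N'}(a')$ and $g'$ is bounded near $\pi_{N''}(a'')$, this yields $u\le g'+O(1)\le g+O(1)$ near $\pi_{N'}(a')$ and symmetrically near $\pi_{N''}(a'')$, hence $u\le g+O(1)$ globally. Then one invokes the elementary fact (maximality of $g$ off $\pi_N(S)$, zero boundary values; or \cite[Lemma~4.1]{Ra-Si}) that $\sup\{u\in PSH^-(\Om):u\le g+O(1)\}=g$: for $\epsilon>0$, $(1+\epsilon)u\le(1+\epsilon)g+O(1)$ and $(1+\epsilon)g-g=\epsilon g\to-\infty$ at $\pi_N(S)$, so $(1+\epsilon)u\le g$ near $\pi_N(S)$, hence everywhere by the maximum principle, and one lets $\epsilon\to0$.

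This also removes the circularity in your scheme. The paper proves $(b)\Leftrightarrow(d)$ \emph{without} the sup formula, via Lemma~\ref{equicontinuity}: the two-sided sandwich, together with the fact that on spheres $\partial B(a,\delta)$ with $a\in\pi_{N'}(a')$ the functions $G_{\Om,S''_k}$ are uniformly bounded (their poles lie far away) and the convergent piece is Cauchy, gives a $\delta$-uniform bound $|G_{k_1}-G_{k_2}|\le C$ on those spheres, which is exactly the hypothesis of Lemma~\ref{equicontinuity}. Your $(b)\Rightarrow(d)$ via ``unique maximal PSH extension with boundary data dictated by $g$ and $\tilde g'$'' is not a well-posed statement and does not follow from Theorem~\ref{bootstrap's brother} as written.
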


For a finite set $S\subset \Om$, we will denote by $\I_{\Om, S}$ the ideal of
holomorphic functions on $\Om$ which vanish on the set $S$. The
$p$-th power of that ideal, $\I^p_{\Om, S}$, is the the ideal of
holomorphic functions in $\Om$ which vanish to order
at least $p$ on the set $S$.

It is natural to ask how close a pluricomplex Green function 
with pole set $S$ is to being a maximum of 
functions of the form $\frac1p \log|f|$, where $f\in \I^p_{\Om, S}$.  
A version of this in the framework of the convergence question
is given below; it is inspired by  \cite[Theorem 1.1]{Ni}. First we need
to define a slightly more restrictive class of domains.

\begin{defn} \label{strict hyperconvex}
 A bounded domain $\Om$ in $\mathbb C^n$ is said to be \emph{strictly hyperconvex} if
 there exist a bounded open \nhd\ $U$ of  $\ov{\Om}$
 and a real valued continuous plurisubharmonic function $\rho$ on $U$ such that
 $\Om=\{z \in U: \rho(z)<0\}.$
 \end{defn}
This type of domains, under slightly stronger condition, was introduced earlier in \cite{Ni}.
Note that there exist smoothly bounded pseudoconvex domains which do not have a  Stein \nhd\  basis (e.g., worm-domains), in particular, such domains are hyperconvex but not strictly hyperconvex.

\begin{theorem} \label{Nivoche}
 Let $\Om$ be a bounded strictly hyperconvex domain in $\cn$
and $\{S_k\}$ be an interior convergent sequence $\Om^N$ that converges to $S \in \Om^{N}$.
Suppose that $G_{\Om, S_k}$ tends to $g  \in PSH^{-} (\Om)$ in $L^1_{loc} (\Om).$
Then  for every $\ve>0$,
there exist a hyperconvex domain $\Om_\ve$ containing $\ov{\Om}$
such that for every compact $K \subset \Om \setminus S$, we can find $p \ge 1$ and
 a collection of holomorphic functions
$\{(f_{1, k},\cdots, f_{3n, k})\}_{k \ge 1}$ where $f_{j, k} \in \I^p_{\Om_\ve, S_k}$ satisfying the following conditions:

\n
$(i)$ $ \Vert f_{j, k}\Vert_{\ov \Om} <1;$

\n
$(ii)$ For every $k$ large enough, the following estimate holds on $K$
$$g-\ve <\frac1{p} \max \{\log \vert f_{1, k} \vert, \cdots, \log \vert f_{3n, k}\vert \} <g+\ve.$$

\n
$(iii)$ The common zero set in $\Om_\ve$ of $f_{1, k}, \cdots, f_{3n, k}$ coincides exactly with $S$.
\end{theorem}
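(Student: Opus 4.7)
The plan is to combine Nivoche's approximation theorem \cite[Theorem 1.1]{Ni} with the uniform convergence on compacta supplied by Theorem \ref{bootstrap's brother}. The architecture is: enlarge $\Om$ to a strictly hyperconvex domain $\Om_\ve \supset \ov\Om$; split the error $\bigl|g - \tfrac{1}{p}\max_j \log|f_{j,k}|\bigr|$ on $K$ into three pieces each of size $\ve/3$; invoke Nivoche on $\Om_\ve$ applied to each $S_k$; and finally arrange for the vanishing order $p$ to be uniform in $k$.

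First I would construct $\Om_\ve$. By strict hyperconvexity, $\Om = \{\rho < 0\}$ with $\rho$ continuous plurisubharmonic on an open $U \supset \ov\Om$; for small $\tau > 0$ the sublevel set $\Om_\tau := \{\rho < \tau\}$ is a bounded hyperconvex neighbourhood of $\ov\Om$. A standard domain-comparison using $\rho$ gives $G_{\Om_\tau, a} \to G_{\Om, a}$ uniformly on compact sets disjoint from $a$, uniformly for $a$ in a compact subset of $\Om$; applying this at each of the interior-convergent poles picks $\tau = \tau(\ve)$ so that $|G_{\Om_\tau, S_k} - G_{\Om, S_k}| < \ve/3$ on $K$ for every $k$, and we set $\Om_\ve := \Om_\tau$. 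Next, interior convergence together with Theorem \ref{bootstrap's brother} yields $G_{\Om, S_k} \to g$ uniformly on $K \subset \ov\Om \setminus \pi(S)$, so $|G_{\Om, S_k} - g| < \ve/3$ on $K$ for $k \ge k_0$.

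For each $k \ge k_0$ I would apply \cite[Theorem 1.1]{Ni} to the strictly hyperconvex pair $(\Om_\ve, S_k)$ (with $S_k$ an $N$-tuple of distinct points of unit multiplicity). This yields some $p_k$ and holomorphic $f_{j,k} \in \I^{p_k}_{\Om_\ve, S_k}$ for $1 \le j \le 3n$, with $\|f_{j,k}\|_{\ov\Om} < 1$, common zero set in $\Om_\ve$ equal to $S_k$, and
$$
\bigl| G_{\Om_\ve, S_k} - \tfrac{1}{p_k}\max_j \log|f_{j,k}| \bigr| < \ve/3 \quad \text{on } K.
$$
Conditions (i) and (iii) are then immediate, and the three error bounds combine to give (ii) with $p = p_k$. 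The principal obstacle is that Nivoche's output $p_k$ a priori depends on $k$, whereas the statement demands a single $p$ valid for all large $k$. I would resolve this in either of two ways. (A) Track through the Ohsawa--Takegoshi / $L^2$ construction underlying \cite{Ni} and check that the required $p$ depends only on parameters that stay bounded along $(S_k)$: $\text{dist}(S_k, \partial \Om_\ve)$, $\text{dist}(K, S_k)$, and the common modulus of continuity of $G_{\Om_\ve, S_k}$ on $K$ (uniform in $k$ by a second application of Theorem \ref{bootstrap's brother}, now on $\Om_\ve$). (B) Alternatively, apply Nivoche once, with a fixed $p$, to the limit $g_\ve := \lim_k G_{\Om_\ve, S_k}$ to obtain $F_1, \dots, F_{3n}$, and then for each large $k$ perturb the $F_j$ by an $\bar\partial$-correction of size $O(\|S_k - S\|) = o(1)$ so as to land inside $\I^p_{\Om_\ve, S_k}$ without enlarging the common zero locus. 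Option (B) localises the whole $k$-uniformity issue into a single continuous-dependence step on the vanishing ideal, and this is where I expect the main technical effort to lie.
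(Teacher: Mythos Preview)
Your skeleton is exactly the paper's: enlarge $\Om$ to $\Om_\ve$ via the strict-hyperconvexity defining function (the paper's Lemma~\ref{continuity Green}), use Theorem~\ref{bootstrap's brother} for the piece $|G_{\Om,S_k}-g|<\ve/2$ on $K$, and then approximate each $G_{\Om_\ve,S_k}$ by $\tfrac1p\max_j\log|f_{j,k}|$. The paper does not cite \cite[Theorem~1.1]{Ni} as a black box, though; it proves a self-contained multipole version (Lemma~\ref{Aytuna}) by Ohsawa--Takegoshi plus the Aytuna--Zakharyuta symmetrisation trick to cut the number of functions down to $3n$, and then perturbs by small multiples of $g_j^p$ (with $g_1,\dots,g_n$ cutting out $S_k$) to force the exact zero set~(iii). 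Your Option~(A) is precisely this: unpacking the construction one sees that the Ohsawa--Takegoshi exponent $p$ depends only on $\ve$, the diameter of $\Om_\ve$, and the gap $\operatorname{dist}(\partial\Om,\partial\Om_\ve)$, hence not on $k$; the paper's proof is terse on this point but it is Option~(A) that is being invoked.

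Option~(B), by contrast, does not work as stated. The limit $g_\ve=\lim_k G_{\Om_\ve,S_k}$ is \emph{not} a multipole Green function $G_{\Om_\ve,\pi(S)}$: when several poles coalesce at $a\in\pi(S)$ the singularity of $g_\ve$ at $a$ encodes directional data from the collision and is strictly worse than a simple logarithmic pole (cf.\ \cite{Ma-Ra-Si-Th}). Neither Nivoche's original theorem nor Lemma~\ref{Aytuna} applies to such a function, so there is no fixed tuple $F_1,\dots,F_{3n}$ to perturb from. And even granting such $F_j$, the $\bar\partial$-correction you suggest would have to change the vanishing ideal from $\I^p_{\pi(S)}$ (order $p$ at the collided points) to $\I^p_{S_k}$ (order $p$ at $N$ nearby \emph{distinct} points); this is not a perturbation of size $O(\|S_k-S\|)$, and controlling it uniformly on $K$ together with the sup bound~(i) is at least as hard as proving Lemma~\ref{Aytuna} from scratch. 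Stick with~(A).
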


\n
In the converse direction we have the following partial result,
which says that if a sequence of functions of the correct type
converges, then it must converge to the limit of the corresponding
Green functions.

\begin{prop} \label{converse}
 Let $\Om$ be a bounded hyperconvex domain in $\cn$ and
 $\{S_k\}_{k \ge 1}$ be a sequence in $\Om^N$ that converges to $S \in \Om^N$. Assume that there exist a sequence $\{p_k\}_{k \ge 1}$ of positive integers, a collection
$\{(f_{1, k}, \cdots, f_{n_k, k})\}_{k \ge 1}$ of holomorphic functions on $\Om$ satisfying the following properties.

\n
$ (i) \Vert f_{j, k}\Vert_{\ov \Om} <1$ for every $j, k$.

\n
$ (ii) f_{j, k} \in \I^{p_k}_{S_k}$ for every $k$ and $1 \le j \le n_k.$

\n
$(iii)$ The sequence $u_k :=\frac1{p_k} \max \{\log \vert f_{1, k} \vert, \cdots, \log \vert f_{n_k, k}\vert \}$
converges in $L^1_{loc} (\Om)$ to $u \in \mathcal F (\Om)$ as $k$ tends to $\infty.$

\n
$(iv) \int_\Om (dd^c u)^n  \le N.$

Then the sequence
$G_{\Om, S_k}$ converges to $u$ locally uniformly on $\ov{\Om} \setminus \pi(S).$
\end{prop}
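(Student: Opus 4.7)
The strategy is to identify any $L^1_{\mathrm{loc}}$ subsequential limit of $\{G_{\Om, S_k}\}$ with $u$, so that the whole sequence converges in $L^1_{\mathrm{loc}}$ to $u$, and then apply the last assertion of Theorem~\ref{bootstrap's brother} to upgrade to locally uniform convergence on $\ov\Om\setminus\pi(S)$.

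First I would establish the basic inequality $u_k\le G_{\Om,S_k}$ on $\Om$. Hypothesis~$(i)$ gives $u_k\le 0$; for each pole $a\in S_k$, fix $r>0$ with $\ov{B(a,r)}\subset\Om$, chosen uniform in $k$ since $S_k\to S\in\Om^N$. Hypothesis~$(ii)$ and the Schwarz lemma applied to each $f_{j,k}$ (vanishing to order $\ge p_k$ at $a$, bounded by $1$ on $\ov{B(a,r)}$) yield $|f_{j,k}(z)|\le (|z-a|/r)^{p_k}$, whence $u_k(z)\le\log|z-a|-\log r$ on $B(a,r)$. So $u_k$ is a competitor in the supremum defining $G_{\Om,S_k}$.

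Next, by weak compactness, any subsequence of $\{G_{\Om,S_k}\}$ (uniformly bounded above by $0$) has a further subsequence $G_{\Om,S_{k_j}}$ converging in $L^1_{\mathrm{loc}}$ to some $g\in PSH^-(\Om)$. Since $\{S_k\}$ is interior convergent, Theorem~\ref{bootstrap's brother} applies to this subsequence and yields that $g\in \F(\Om)$, with $(dd^c g)^n=\sum_{a\in\pi(S)}\nu_a\de_a$ of total mass $N$; that $g$ is continuous and maximal on $\Om\setminus\pi(S)$ with $g\to 0$ at $\partial\Om$; and that $G_{\Om,S_{k_j}}\to g$ uniformly on compacta of $\ov\Om\setminus\pi(S)$. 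Passing to $L^1_{\mathrm{loc}}$ limits in $u_{k_j}\le G_{\Om,S_{k_j}}$ gives $u\le g$ on $\Om$.

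Finally, the comparison principle in Cegrell's class $\F(\Om)$ applied to $u\le g$ gives $N=\int_\Om (dd^c g)^n\le\int_\Om(dd^c u)^n$, and combined with hypothesis~$(iv)$ this forces $\int_\Om(dd^c u)^n=N=\int_\Om(dd^c g)^n$. The hard part will then be to invoke the uniqueness in $\F(\Om)$: namely, that two functions in $\F(\Om)$ with $u\le g$ and equal total Monge-Amp\`ere masses must coincide; this rests on the mixed Monge-Amp\`ere inequalities together with the Dirichlet uniqueness in $\F$ from pluripotential theory. Granting this, $u=g$, so every $L^1_{\mathrm{loc}}$ subsequential limit of $\{G_{\Om,S_k}\}$ equals $u$; hence $G_{\Om,S_k}\to u$ in $L^1_{\mathrm{loc}}$, and the final assertion of Theorem~\ref{bootstrap's brother} delivers the locally uniform convergence on $\ov\Om\setminus\pi(S)$.
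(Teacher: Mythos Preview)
Your outline matches the paper's proof almost exactly: establish $u_k\le G_{\Om,S_k}$, pass to an arbitrary $L^1_{\mathrm{loc}}$ limit $g$, invoke Theorem~\ref{bootstrap's brother} to get $(dd^c g)^n=\sum_{a\in\pi(S)}\nu_a\de_a$ with total mass $N$, deduce $u\le g$, prove $u=g$, and conclude via Theorem~\ref{bootstrap's brother} again.

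The one substantive divergence is the uniqueness step. You reduce to equal \emph{total} Monge--Amp\`ere masses and then invoke as a black box the principle ``$u\le g$ in $\F(\Om)$ with $\int_\Om(dd^c u)^n=\int_\Om(dd^c g)^n$ forces $u=g$,'' flagging it as the hard part. The paper is more concrete and avoids this general statement entirely: it uses Lemma~\ref{pluripotential lemma}(a) (comparison of Monge--Amp\`ere mass on pluripolar sets) to compare at each point of $\pi(S)$. Since $u\le g$, one gets $(dd^c u)^n(\{a\})\ge(dd^c g)^n(\{a\})=\nu_a$ for every $a\in\pi(S)$; summing over $\pi(S)$ and combining with hypothesis~(iv) forces $(dd^c u)^n$ to be supported on $\pi(S)$ with exactly the same point masses, hence $(dd^c u)^n=(dd^c g)^n$ \emph{as measures}. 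Then Lemma~\ref{pluripotential lemma}(b) (with $(dd^c u)^n-(dd^c g)^n\equiv 0$ on the right-hand side) yields $u=g$ directly. This route exploits the structural information you already extracted from Theorem~\ref{bootstrap's brother}(c) and sidesteps the need to justify the general ``equal total mass'' principle, which is not entirely standard as stated.
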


\section{Proofs of Theorem \ref{bootstrap's brother},
Corollary \ref{criterion}, and Proposition \ref{split}}

Our first objective is to show how, for the very special case of Green functions
with interior convergent pole sets, 
$L^1_{loc}$ convergence implies uniform convergence on compacta. 
In order to do so, we will need a property of uniform continuity
that will first require a lemma about variation of domains.

\begin{lemma}\label{variation of domain}
 Let $\Om$ be a bounded hyperconvex domain in $\C^n$ and $E$ be a finite subset of $\Om.$ Then for every $\de>0$, there exists $r_0>0$ and a relatively compact hyperconvex subdomain $\Om'$ of $\Om$ such that for every pole set $E' \subset \cup_{a \in E} \mathbb B(a, r_0)$ we have
$$G_{\Om', E'} (z)  \le G_{\Om, E'} (z) +\de, \ \forall z \in \Om'.$$
\end{lemma}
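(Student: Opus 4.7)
The plan is to construct a plurisubharmonic candidate for $G_{\Om, E'}$ on $\Om$ whose restriction to $\Om'$ dominates $G_{\Om', E'}-\de$, so that the extremal property of $G_{\Om,E'}$ yields the desired bound.

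First I would fix a continuous negative \psh\ exhaustion $\rho$ of $\Om$, and set $\Om'$ to be the connected component of $\{\rho<-\eta\}$ containing $E$, for a small $\eta>0$ to be determined later. Then $\rho+\eta$ is a continuous negative \psh\ exhaustion of $\Om'$, so $\Om'$ is hyperconvex; and for $r_0$ small enough the compact set $U:=\bigcup_{a\in E}\ov{\B(a,r_0)}$ is contained in $\Om'$, so every admissible $E'$ satisfies $E'\subset U\Subset\Om'$.

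Next I would set $\ve:=\sup_{z\in\partial\Om'}|G_{\Om,E'}(z)|$ and define
\[
W(z):=\begin{cases}\max\bigl(G_{\Om',E'}(z)-\ve,\,G_{\Om,E'}(z)\bigr),&z\in\Om',\\ G_{\Om,E'}(z),&z\in\Om\setminus\Om'.\end{cases}
\]
By Demailly's continuity of Green functions on hyperconvex domains, $G_{\Om',E'}$ extends continuously to $\ov{\Om'}$ and vanishes on $\partial\Om'$, so $\limsup_{\Om'\ni z\to w}(G_{\Om',E'}(z)-\ve)=-\ve\le G_{\Om,E'}(w)$ for every $w\in\partial\Om'$. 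The standard gluing principle for \psh\ functions then yields $W\in\PSH(\Om)$. One checks that $W\le 0$ and that, near each $a\in E'$, both terms of the maximum are bounded above by $\log|z-a|+O(1)$, so $W$ inherits the correct logarithmic poles. Hence $W$ competes in the defining envelope of $G_{\Om,E'}$, giving $W\le G_{\Om,E'}$; restricted to $\Om'$ this reads $G_{\Om',E'}\le G_{\Om,E'}+\ve$.

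To conclude I would bound $\ve$ uniformly in $E'$. Since $\sum_{a\in E'}G_{\Om,\{a\}}$ is a \psh\ competitor for $G_{\Om,E'}$, one has $|G_{\Om,E'}(z)|\le\sum_{a\in E'}|G_{\Om,\{a\}}(z)|$. Assuming $|E'|\le N$ is a priori bounded (as in the intended application to sequences of fixed cardinality), the problem reduces to making $|G_{\Om,\{a\}}(z)|$ small uniformly for $z$ near $\partial\Om$ and $a\in\ov U$. Demailly's theorem on joint continuity of $(z,a)\mapsto G_{\Om,\{a\}}(z)$ on $\ov\Om\times\ov U$, together with vanishing on $\partial\Om\times\ov U$, then allows one to shrink $\eta$ so that $\ve<\de$.

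The main obstacle is the gluing step: it demands simultaneously hyperconvexity of $\Om'$ (so that $G_{\Om',E'}$ vanishes on $\partial\Om'$ and thus $G_{\Om',E'}-\ve$ can be dominated by $G_{\Om,E'}$ there) and uniform continuity in the pole variable of the single-pole Green function near $\partial\Om$ (to control $\ve$ independently of $E'$). With those two ingredients the maximum $W$ automatically retains the correct log singularities and negativity, and the conclusion follows from the extremal characterization of the pluricomplex Green function.
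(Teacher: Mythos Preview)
Your proof is correct and follows essentially the same route as the paper's: both arguments glue $\max(G_{\Om',E'}-\de,\,G_{\Om,E'})$ across $\partial\Om'$ to a competitor for $G_{\Om,E'}$, and both control the boundary discrepancy via the sum estimate $G_{\Om,E'}\ge\sum_{a\in E'}G_{\Om,a}$ together with continuity of the single-pole Green function in the pole variable. The only differences are cosmetic ordering (the paper fixes $\Om'$ and $r_0$ first so that the boundary values already satisfy $G_{\Om,E'}>-\de$, whereas you carry the quantity $\ve$ and bound it at the end), and your explicit flag that a uniform bound on $\#E'$ is needed---a point the paper's proof also uses without comment.
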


\begin{proof}
 Choose a compact hyperconvex subdomain $\Om'$ of $\Om$ such that for every $\al \in E$ we have
$$G_{\Om, \al} (z)>- \frac{\de}{2N}, \  \forall z \in \partial \Om',$$
where $N:=\#E$.
By continuity with respect to the pole of the Green functions (cf. Proposition \ref{blocki}) we can find $r_0>0$ so small that
for every $a \in  \cup_{\al \in E} \mathbb B(\al, r_0)$ 
$$
G_{\Om, a} (z) >- \frac{\de}{N}, \  \forall z \in \partial \Om'.
$$
Thus for $E' \subset \cup_{\al \in E} \mathbb B(\al , r_0)$ with $\# E' =N$ we have
$$
G_{\Om, E'} (z) \ge \sum_{\al \in E'} G_{\Om, \al} (z) >-\de,
\ \forall z \in \partial \Om'.
 $$
It follows that $G_{\Om', E'} \le G_{\Om, E'} +\de$ on $\partial \Om'$.
Define $\hat G= \max \{G_{\Om', E'}-\de, G_{\Om, E'} \}$ on $\Om'$ and
$\hat G =G_{\Om, E'}$ on $\Om \setminus \Om'$. It follows from the definition of Green function that
$\hat G \le G_{\Om, E'}$ on $\Om.$ By restricting this inequality to $\Om'$, we obtain the desired estimate.
\end{proof}

\begin{lemma}\label{uniform continuity}
 Let $\Om \subset \cn$ be a bounded hyperconvex domain
and $\{S_k\}_{k \ge 1} \subset \Om^N$ be a sequence 
that  converges to $S \in \Om^N$.
Then  for every fixed point $z_0 \in \Om \setminus \pi(S)$ and $\de>0$,
there exist $r_0>0, k_0 \ge1$ such that for every $z', z'' \in  B(z_0, r_0)$ and $k \ge k_0$ we have
$$\vert G_{\Om, S_k} (z') -G_{\Om, S_k} (z'') \vert<\de.$$
\end{lemma}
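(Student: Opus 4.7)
The plan is to combine Lemma \ref{variation of domain} (variation of the domain) with the sandwich bound provided by Proposition \ref{blocki} (continuity with respect to a single pole) and the maximality of $G_{\Om, S_k}$ away from its poles. First, apply Lemma \ref{variation of domain} with $E = \pi(S)$ and tolerance $\delta/3$: this yields $r > 0$ and a hyperconvex subdomain $\Om' \Subset \Om$. Inspecting its proof, one sees that $\Om'$ may be chosen to contain any prescribed compact subset of $\Om$ (by arranging $\partial\Om'$ close enough to $\partial\Om$), so one may assume $\overline{B(z_0, 2\rho)} \subset \Om'$ for some $\rho > 0$ with $\overline{B(z_0, 2\rho)} \cap \pi(S) = \emptyset$. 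For $k$ sufficiently large, $S_k \subset \bigcup_{a \in \pi(S)} B(a, r)$ and $S_k \cap \overline{B(z_0, 2\rho)} = \emptyset$; Lemma \ref{variation of domain} then gives $|G_{\Om, S_k} - G_{\Om', S_k}| \le \delta/3$ on $\Om'$. By the triangle inequality, it suffices to prove $|G_{\Om', S_k}(z') - G_{\Om', S_k}(z'')| < \delta/3$ for $z', z'' \in B(z_0, r_0)$ and $k$ large, for some small $r_0 > 0$.

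On $\overline{B(z_0, 2\rho)}$, the sandwich
$$
\sum_{j=1}^{N} G_{\Om', s_{j,k}}(z) \le G_{\Om', S_k}(z) \le 0,
$$
together with Proposition \ref{blocki} (which gives uniform convergence of each $G_{\Om', s_{j,k}}$ to $G_{\Om', s_j}$ on this ball), yields uniform bounds $-M \le G_{\Om', S_k} \le 0$ for $k$ large. Moreover, for such $k$, $G_{\Om', S_k}$ is maximal plurisubharmonic on $B(z_0, 2\rho)$, this ball being disjoint from the pole set.

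Extract an arbitrary $L^1_{\mathrm{loc}}$-convergent subsequence $G_{\Om', S_{k_j}} \to g'$ on $B(z_0, 2\rho)$. By the Bedford--Taylor continuity of the Monge--Amp\`ere operator under uniformly bounded $L^1_{\mathrm{loc}}$-limits, $g'$ is bounded plurisubharmonic with $(dd^c g')^n = 0$ on $B(z_0, 2\rho)$, hence continuous there. Hartogs' lemma yields the upper bound $G_{\Om', S_{k_j}} \le g' + \epsilon$ on compact subsets; for the matching lower bound one exploits the maximality of each $G_{\Om', S_{k_j}}$, which makes it equal to the Perron--Bremermann envelope of its own boundary values on any sub-ball, and uses the uniform control on these boundary values inherited from the uniform convergence of the sum of single-pole Green functions. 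Propagating uniform control from boundary to interior via Walsh-type continuity of the envelope gives uniform convergence of the subsequence on compact subsets of $B(z_0, 2\rho)$. Equicontinuity at $z_0$ then follows from the continuity of $g'$; since the argument applies to any convergent subsequence, it gives equicontinuity for the full sequence.

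The principal obstacle is the lower bound in the last step: a uniformly bounded sequence of plurisubharmonic functions converging in $L^1_{\mathrm{loc}}$ to a continuous limit need not converge uniformly on compacta in general (downward peaks can persist). The crucial additional structure ruling this out here is the maximality of each $G_{\Om', S_{k_j}}$, which combined with the uniform sandwich provided by Proposition \ref{blocki} forces the right kind of rigidity; making this precise is the technical heart of the argument, and may require an iterative application of Lemma \ref{variation of domain} on a smaller ball around $z_0$, or a quantitative stability estimate for the Perron--Bremermann envelope under perturbation of its boundary data.
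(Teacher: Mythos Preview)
Your proposal has a genuine gap at precisely the point you flag yourself. You reduce the problem to showing that the (uniformly bounded, maximal) functions $G_{\Om',S_k}$ converge \emph{uniformly} on a neighbourhood of $z_0$, and then try to extract this from $L^1_{loc}$ convergence plus maximality. But the ingredients you invoke do not close the argument. The sandwich $\sum_j G_{\Om',s_{j,k}} \le G_{\Om',S_k} \le 0$ only gives uniform \emph{bounds}; it does not give uniform control of the boundary values of $G_{\Om',S_k}$ on $\partial B(z_0,\rho)$, since the left-hand side merely converges to a function strictly smaller than $g'$. Consequently, the Perron--Bremermann representation of $G_{\Om',S_k}$ on the sub-ball cannot be compared to that of $g'$ with an error that goes to zero. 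The ``iterative application of Lemma \ref{variation of domain}'' or ``quantitative stability estimate for the envelope'' you suggest at the end are placeholders, not arguments. In the paper's logical structure this lemma is the \emph{input} to the uniform-convergence result (Lemma \ref{uniform convergence}), so attempting to deduce equicontinuity from a uniform-convergence mechanism is effectively circular.

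The paper's proof avoids all of this with a direct construction. Choose a direction $u\in\mathbb S^{2n-1}$ so that the complex line $z_0+\C u$ misses (the projections of) $\pi(S)$, set $P_k(z)=\prod_{a\in S_k}\big((z-a)\cdot\bar u\big)$, and define the holomorphic map
\[
\Phi_k(z)=z+\frac{P_k(z)}{P_k(z')}(z''-z').
\]
This map fixes every point of $S_k$ (since $P_k$ vanishes there), sends $z'$ to $z''$, and displaces points by at most $C\,|z'-z''|$ with $C$ independent of $k$. One then uses Lemma \ref{variation of domain} only to produce $\Om'\Subset\Om$ with $G_{\Om',S_k}\le G_{\Om,S_k}+\de$, and for $r_0$ small enough $\Phi_k(\Om')\subset\Om$, so the monotonicity of the Green function under holomorphic maps fixing the poles gives
\[
G_{\Om,S_k}(z'')\le G_{\Phi_k(\Om'),S_k}(z'')\le G_{\Om',S_k}(z') < G_{\Om,S_k}(z')+\de,
\]
and symmetry finishes. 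The key idea you are missing is this explicit pole-fixing perturbation; it replaces the soft compactness/maximality argument you attempt by a one-line comparison.
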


\begin{proof} Let $z\cdot \bar w:= \sum_k z_k\bar w_k$ stand for the Hermitian inner product
in $\C^n$. We pick $u \in \mathbb S^{2n-1}:= \{v \in \mathbb C^n : \|v\|=1\}$ such that
for any $a\in\pi(S)$, the orthogonal projection of $a$ to $z_0+\mathbb C u$,
$\pi_{z,u}(a):= z_0 + \left( (a-z_0)\cdot \bar u \right) u$ is different from $z_0$.
This is possible since $\bigcup_{a\in\pi(S)} \{ u \in \mathbb S^{2n-1}: (a-z_0)\cdot \bar u =0\}$
is a subvariety of real codimension $2$ of $\mathbb S^{2n-1}$.

Let $r_1:= \min_{a\in\pi(S)} \left| (a-z_0)\cdot \bar u \right|$.  For $k\ge k_1$,
$\min_{a\in\pi(S_k)} \left| (a-z_0)\cdot \bar u \right| \ge \frac23 r_1$.  Let $r_0:= \frac13 r_1$.
If we take $z'\in B(z_0,r_0)$, then
$$
\left| (z'-a)\cdot \bar u \right| = \left|(z'-z_0)\cdot \bar u  -(a-z_0)\cdot \bar u \right|
\ge r_0.
$$
We define for $k \ge 1$
$$P_k (z):=\prod_{a \in S_k} \left( (z-a)\cdot \bar u \right),
$$
and for $k\ge k_1$, $$ \Phi_k (z):=z+\frac{P_k (z)}{P_k (z')}(z''-z').$$
Note that $\left|\frac{P_k (z)}{P_k (z')}\right| \le C:=\left(2 \mbox{diam}(\Omega)/r_0\right)^{\#(\pi(S))}$, and so $\left| \Phi_k (z)-z\right| \le 2Cr_0$, for any $z\in \Omega$.

Using Lemma \ref{variation of domain} , we can find  a relatively compact hyperconvex subdomain $\Om'$ of $\Om$ such that $z_0 \in \Om'$, and for every $k$ sufficiently large,
\begin{equation} G_{\Om', S_k} (z)  < G_{\Om, S_k} (z) +\de, \ \forall z \in \Om'. \end{equation}
Now, reducing the value of $r_0$, if needed, we may assume that $B(z_0, r_0) \subset \Om'$ and
for every $z', z'' \in \mathbb B(z_0, r_0)$ we have $\Phi_k (\Om') \subset \Om.$
Then, by the decreasing property under holomorphic maps of the Green functions (noting that $\Phi_k$ fixes $S_k$) we get
\begin{equation} G_{\Om, S_k} (z'') \le G_{\Phi_k (\Om'), S_k} (z'') \le G_{\Om', S_k} (z'). \end{equation}
Combining (2) and (3) we arrive  at
$$G_{\Om, S_k} (z'') <G_{\Om, S_k} (z')+\de.$$
By exchanging the roles of $z', z''$ we obtain the desired estimate.
\end{proof}

\begin{lemma} \label{uniform convergence}
 Let $\Om, \{S_k\}_{k \ge 1}$ and $S$ be as in Lemma \ref{uniform continuity}.
Assume that $G_{\Om, S_k}$ tends to a plurisubharmonic function $g$ in $L^1_{loc} (\Om)$. Then the convergence is uniform on every compact subset of $\ov{\Om} \setminus \pi(S).$
\end{lemma}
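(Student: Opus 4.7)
My plan is to split a compact set $K\subset \ov{\Om}\setminus \pi(S)$ into two pieces handled by different arguments: the portion $K\setminus V$ lying inside $\Om$, treated by equicontinuity plus Arzel\`a--Ascoli, and the portion $K\cap V$ near $\partial \Om$, treated through the rough estimate and Proposition \ref{blocki}.

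For the interior piece, I fix a compact $K_0\subset \Om\setminus\pi(S)$. Lemma \ref{uniform continuity} provides, for every $z_0\in K_0$ and every $\de>0$, a radius $r_0=r_0(z_0,\de)$ and an integer $k_0=k_0(z_0,\de)$ such that the oscillation of $G_k$ on $\mathbb B(z_0, r_0)$ is at most $\de$ for all $k\ge k_0$. Combining this with the $L^1_{loc}$-convergence of $G_k$ (which yields finite, and hence bounded in $k$, integrals of $G_k$ on each small ball), I obtain a uniform pointwise lower bound for $G_k$ near $z_0$; together with the trivial upper bound $G_k\le 0$, a finite subcover gives both uniform boundedness and equicontinuity of $\{G_k\}$ on $K_0$. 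Arzel\`a--Ascoli then extracts a uniformly convergent subsequence whose limit must coincide with $g$ by the $L^1_{loc}$-convergence, and the usual subsequence argument forces the entire sequence $G_k$ to converge to $g$ uniformly on $K_0$.

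For the boundary piece, interior convergence of $\{S_k\}$ lets me fix a compact $K'\Subset \Om$ containing every pole $a_{j,k}$. Applied to the compact set $\partial \Om\times K'$ (on which necessarily $z\ne a$), Proposition \ref{blocki} ensures that $G_\Om(z,a)$ is jointly continuous and identically zero there; a tube-lemma compactness argument then produces, for any $\ve>0$, a neighbourhood $V$ of $\partial \Om$ in $\C^n$ such that $G_\Om(z,a)>-\ve/N$ on $(V\cap \ov{\Om})\times K'$. The rough estimate $G_k\ge \sum_{j=1}^N G_\Om(\cdot, a_{j,k})$ then gives $-\ve\le G_k\le 0$ on $V\cap \ov{\Om}$, uniformly in $k$. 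Passing to the $L^1_{loc}$-limit, and using that plurisubharmonic functions coincide with their upper semicontinuous regularisations (so a.e.\ bounds propagate to pointwise bounds), yields $-\ve\le g\le 0$ on $V\cap \Om$; extending $g$ by $0$ on $\partial \Om$ I get $|G_k-g|\le 2\ve$ on $V\cap \ov\Om$ for all large $k$. Combined with the interior step applied to the compact set $K\setminus V\subset \Om\setminus \pi(S)$, this completes the proof.

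The main obstacle is the boundary step: everything hinges on the uniform-in-$a\in K'$ boundary behaviour of single-pole Green functions coming from Proposition \ref{blocki}. This is also precisely where the interior-convergence hypothesis is used, and without it the argument would break down, since poles drifting to $\partial \Om$ would destroy the uniform sandwich estimate on $V$.
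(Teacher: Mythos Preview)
Your proof is correct. The interior step---using Lemma~\ref{uniform continuity} to obtain local equicontinuity and uniform bounds, then invoking Arzel\`a--Ascoli and identifying the limit via $L^1_{loc}$---is essentially the paper's argument recast: the paper runs the same equicontinuity data through a direct contradiction (choosing a point $w$ of pointwise convergence inside the small ball) rather than through Arzel\`a--Ascoli, but the content is the same.

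The boundary step, however, is genuinely different. The paper does not appeal to Proposition~\ref{blocki} or to the rough sum estimate near $\partial\Om$. Instead it proves an auxiliary Lemma (Lemma~\ref{equicontinuity}): once the $G_k$ are uniformly Cauchy on small spheres $\cup_{a\in\pi(S)}\partial B(a,\de)$ (a compact subset of $\Om\setminus\pi(S)$, so the interior step applies), the logarithmic upper bound near the poles converts the additive estimate into a multiplicative one, $(1+\eta)G_{k_1}\le G_{k_2}\le(1-\eta)G_{k_1}$, and maximality of $G_{k_1},G_{k_2}$ outside the poles together with their vanishing on $\partial\Om$ propagates this to all of $\ov\Om\setminus\cup_a \overline{B}(a,\de)$. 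Your route is more elementary---it avoids the maximality argument entirely---but it relies on the joint continuity of the single-pole Green function up to $\partial\Om$ (Demailly / B\l ocki), which the paper's Lemma~\ref{equicontinuity} does not need; conversely, the paper's lemma is reusable (it is invoked again in the proof of Proposition~\ref{split}). One small remark: your sentence ``a.e.\ bounds propagate to pointwise bounds'' is correct for \emph{lower} bounds on u.s.c.\ functions because $\{g<c\}$ is open, but the phrase ``coincide with their upper semicontinuous regularisations'' is not quite the reason; you could also just cite the pointwise convergence already obtained on $V\cap\Om\subset\Om\setminus\pi(S)$ from your interior step.
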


\begin{proof}
First we claim that  the uniform convergence holds  on compact subsets of $\Om \setminus \pi(S)$.
Assume this is is false, then there exist  a compact subset $K$ of $\Om \setminus S,$ a constant $\de>0,$
two sequences $n_k, m_k \uparrow \infty$ and a sequence $\{z_k\} \subset K$ such that
\begin{equation}\vert G_{\Om, S_{n_k}} (z_k) -G_{\Om, S_{m_k}} (z_k) \vert >\de.\end{equation}
By compactness of $K$, we may assume that $z_k \to z^* \in K.$
By passing to subsequences we can suppose further that both sequences $G_{\Om, S_{n_k}}$
and $G_{\Om, S_{m_k}}$ converge {\it pointwise} to $g$ on a dense subset of $\Om.$
According to Lemma \ref{uniform continuity}, there exists $r_0>0$ such that for every $z', z'' \in \mathbb B(z^*, r_0)$ and $k$ large enough
we have
 \begin{equation}
 \vert G_{\Om, S_{n_k}} (z') -G_{\Om, S_{n_k}} (z'') \vert <\de/3, \vert G_{\Om, S_{m_k}} (z') -G_{\Om, S_{m_k}} (z'') \vert <\de/3 .
 \end{equation}
 Choose a point $w \in B(z^*, r_0)$ such that for $k$ large enough we have
 \begin{equation}\vert G_{\Om, S_{n_k}} (w) -G_{\Om, S_{m_k}} (w) \vert <\de/3 .\end{equation}
By applying (5) with $z'=z_k, z''=w$, together with (4) and (6) we reach a contradiction.
This proves our claim. % Finally, using Lemma 1.8, we get the lemma. 

To get the convergence up to the boundary, we need to use a property of
sequences of Green functions that stems from the form of their singularities
and the fact that they are maximal plurisubharmonic outside their poles.
We follow \cite[Lemma 4.5]{Ma-Ra-Si-Th}.

\begin{lemma} 
\label{equicontinuity}
 Let $\Om \subset \cn$ be a bounded hyperconvex domain and $\{S_k\}_{k \ge 1}$ be  sequence  
  that converges to
 $S \in \Om^N$. Denote $G_k:=G_{\Om, S_k}$.
 Assume that there exist  constants $\de_0>0, C>0$ with the following property:  For every $\de \in (0, \de_0]$ we can find  $k(\de)>0$
 such that for every $k_1>k_2>k(\de)$ and $z \in \cup_{a \in \pi (S)} \partial B (a, \de)$ we have
 $$\vert G_{k_1} (z)-G_ {k_2} (z)\vert \le C.$$
 Then the sequence $\{G_k\}$ converges uniformly on compact sets of $\ov \Om \setminus \pi(S)$.
 \end{lemma}
  \begin{proof}
  We can assume that $\ov{B} (a, \de_0) \cap \ov{B} (a', \de_0) =\emptyset$ for $a \ne a' \in \pi(S)$.  Given a compactum $K\subset \ov \Om \setminus \pi(S)$, we have
  uniform bounds on our Green functions given by $0\ge G_k \ge \sum_{s\in S_k} G_{\Om,s}$,
  so that to prove a uniform Cauchy condition, it will be enough to prove that for 
  any $\eta \in (0, 1)$, $z\in K$, there exists $k(\eta)$ such that for 
  $k_1,k_2\ge k(\eta)$, 
  \begin{equation}
  \label{multineq}
  (1+\eta) G_{k_1} (z) \le G_{k_2} (z) \le (1-\eta) G_{k_2} (z).
  \end{equation}

  Near any of its poles $a\in E$, a Green function verifies 
  $G_{\Om,E}(z) \le \log\|z-a\| - \log r$, if $ B(a,r)\subset \Om$. 
  Fix $\eta \in (0, 1)$, using the hypothesis and this upper bound, 
  we can find $\de(\eta)$ so small such that for $\de \in (0, \de(\eta))$ there exists $k( \de)$
  such that if $k_1>k_2>k( \de)$, 
  the  inequalities  \eqref{multineq} hold for
  $z \in \cup_{a \in \pi(S)} \partial B(a, \de)$.
They continue to hold on $\Om \setminus \cup_{a \in \pi(S)} {\ov B} (a, \de)$ since $G_{k_1}$ and $G_{k_2}$ are both maximal continuous plurisubharmonic there, and tend to 
$0$ near $\partial \Om$.
  \end{proof}
To finish the proof of Lemma \ref{uniform convergence}, simply observe
 that uniform convergence on the union of spheres centered on $\pi(S)$
 yields the estimate needed to apply Lemma \ref{equicontinuity}.
\end{proof}

\begin{proof*}{\it Proof of Theorem \ref{bootstrap's brother}.}
Following Remark (d) after Definition \ref{conv}, 
we write $S_k=(S'_k, S''_k)$, where $S'_k$ and $S''_k$ are interior and boundary convergent sequences  in $\Om^{N'}$ and $\Om^{N''}$, respectively.
Then, for any $z\in \Om$ and $k \ge 1$,
\begin{equation}
\label{stdest}
G_{\Om, S''_k}+G_{\Om, S'_k} \le  G_{\Om, S_k} \le  G_{\Om, S'_k}.
\end{equation}
By  Corollary \ref{bdrycv}, $G_{\Om, S''_k}$ goes to $0$ in capacity,
 in particular $G_{\Om, S''_k}$ converges to $0$ in $L^1_{loc}$. 
Since \eqref{stdest} can be rewritten
$$
G_{\Om, S_k} \le  G_{\Om, S'_k} \le G_{\Om, S_k}- G_{\Om, S''_k},
$$
the assumption of the theorem 
 implies that $G_{\Om, S'_k}$ converges to $g$  in $L^1_{loc}$.
Since $S'_k$ is interior convergent, 
by Lemma \ref{uniform convergence}, the convergence is actually uniform on compact subsets of $\ov{\Om} \setminus \pi(S')$, where $S':=\lim_{k \to \infty} S'_k.$
In particular, we have  $\lim_{z \to  \partial \Om} g(z)=0$, 
and also the last statement of the theorem (which is the case where $S_k=S'_k$).

Since uniform convergence of $G_{\Om, S'_k}$ on compacta of $\Om \setminus \pi(S)$ 
implies its convergence in capacity, \eqref{stdest} implies conclusion (a) of the theorem. 

For (c), it suffices to repeat the reasoning at the end of the proof of   \cite[Theorem 1.1]{Ra-Th}. We omit the details.
\end{proof*}

  In the proof of Corollary \ref{criterion}, we will need the following  properties of functions in the class $\F (\Om)$.
  
\begin{lemma}\label{pluripotential lemma}
Let $u, v \in \F (\Om)$
with $u \le v$. Then the following assertions hold.
\begin{enumerate}[(a)]
\item
$\int_S (dd^c v)^n \le \int_S (dd^c u)^n$ for every pluripolar subset $S$ of $\Om.$
\item 
$ \int_{\Om} (v-u)^n (dd^c w)^n \le \int_\Om -w [(dd^c u)^n-(dd^c v)^n],$ for every $w \in PSH (\Om), -1 \le w<0.$
\end{enumerate}
\end{lemma}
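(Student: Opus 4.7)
The strategy in both parts is to reduce to the case of bounded psh functions via the truncation $u_j:=\max(u,-j)$, $v_j:=\max(v,-j)$, which lies in $\mathcal{E}_0(\Om)$, decreases to $u$ and $v$ respectively, and preserves the order $u_j\le v_j$.

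For (a), I would invoke Bedford--Taylor's theorem: for bounded psh functions, the Monge--Amp\`ere measure assigns no mass to pluripolar sets, so $\int_S(dd^cu_j)^n=\int_S(dd^cv_j)^n=0$. Since on the open set $\{u>-j\}$ one has $u=u_j$ locally and hence $(dd^cu)^n=(dd^cu_j)^n$ there, letting $j\to\infty$ shows that the mass $(dd^cu)^n(S)$ is carried entirely by $S\cap\{u=-\infty\}$, and similarly for $v$. The inclusion $\{v=-\infty\}\subseteq\{u=-\infty\}$ (forced by $u\le v$) reduces the problem to comparing the singular masses on the common set $\{u=-\infty\}$. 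I would then apply the comparison principle to the pair $(u,(1+\ve)v)$ on the open set $U_\ve:=\{u<(1+\ve)v\}$: this gives $(1+\ve)^n\int_{U_\ve}(dd^cv)^n\le\int_{U_\ve}(dd^cu)^n$, and letting $\ve\to 0^+$ yields (a), provided $U_\ve$ captures $S\cap\{u=-\infty\}$ modulo a pluripolar error.

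For (b), this is a Xing-type comparison inequality. I would again reduce first to bounded $u,v$ by truncation (the general case then following from monotone convergence of the Monge--Amp\`ere operator on $\F(\Om)$ and finiteness of the integrals involved). Setting $\phi:=v-u\ge 0$, we have $dd^c\phi=dd^cv-dd^cu$, a difference of positive currents. I would integrate $\int_\Om\phi^n(dd^cw)^n$ by parts $n$ times, transferring each $dd^c$ from $w$ onto powers of $\phi$; all boundary terms vanish because $u,v\in\F$ and $w$ is bounded. At each step the identity $dd^c\phi^k=k\phi^{k-1}dd^c\phi+k(k-1)\phi^{k-2}d\phi\wedge d^c\phi$ is used, and the second, positive contribution can be discarded against the positive weight $-w\ge 0$, which tightens the inequality in the correct direction. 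After $n$ iterations and the substitution $dd^c\phi=dd^cv-dd^cu$, one arrives at the stated bound.

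The principal obstacle is in (a): one must verify carefully that the comparison principle applies to $\F$-class functions on the non-relatively-compact open set $U_\ve=\{u<(1+\ve)v\}$ with genuinely vanishing boundary contributions, and that in the joint limit $j\to\infty$, $\ve\to 0^+$ the singular parts of $(dd^cu)^n$ and $(dd^cv)^n$ are captured cleanly. The argument for (b), while a longer calculation, is more mechanical once the positivity bookkeeping in the integration by parts is set up.
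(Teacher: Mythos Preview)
The paper does not prove this lemma at all: it simply cites \cite{Ah-Ce-Cz-Hi} for (a) and \cite{Kh-Hi} for (b). So you are supplying what the paper outsources.

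Your sketch for (b) is essentially the standard Xing-type argument and is the route taken in \cite{Kh-Hi}; modulo the routine justification of integration by parts in $\mathcal F(\Om)$ (which is exactly what Cegrell's framework is built for), it is correct.

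Your sketch for (a), however, has a genuine gap at precisely the point you flag, and it is not merely a technicality. With $u\le v\le 0$, the set $U_\ve=\{u<(1+\ve)v\}$ \emph{need not contain} $\{v=-\infty\}$: on that set both $u$ and $(1+\ve)v$ equal $-\infty$, so the strict inequality fails. But you have already argued that the pluripolar mass of $(dd^c v)^n$ is carried by $\{v=-\infty\}$, so the comparison principle on $U_\ve$ gives no control over $(dd^c v)^n(S)$ at all. (Replacing $1+\ve$ by $1-\ve$ does not help either: then $u\le v\le (1-\ve)v$, the set $\{u<(1-\ve)v\}$ may be all of $\{v<0\}$, and you recover only the total-mass inequality $\int_\Om(dd^c v)^n\le\int_\Om(dd^c u)^n$, not a pluripolar-localized one.) The proof in \cite{Ah-Ce-Cz-Hi} avoids this by working instead with plurifine locality of the Monge--Amp\`ere operator and a Demailly-type inequality for $(dd^c\max(u,v+c))^n$, which allows one to compare the residual masses on $\{u=-\infty\}$ directly without having to trap that set inside a strict sublevel comparison region.
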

\begin{proof}
For (a), see  \cite[Lemma 2.1]{Ah-Ce-Cz-Hi}, and for (b), see  \cite[Proposition 3.4]{Kh-Hi}.
\end{proof}

\begin{proof*}{\it Proof of Corollary \ref{criterion}.}
 We let $g \in PSH^{-} (\Om)$ be an arbitrary limit point of $G_k:=G_{\Om, S_k}$ in $L^1_{loc} (\Om)$.
Then, by Theorem \ref{bootstrap's brother}, $(dd^c g)^n$ is supported on $\pi (S) \cap \Om$ and
$$
\int_\Om (dd^c g)^N  = \sum_{a \in \pi(S) \cap \Om} \nu_a 
=\# \{j\in \{1,\dots,N\}: s_j \in \Om\}.
$$
In particular $g \in \mathcal F(\Om)$. Since $g \le G <0$ on $\Om$ we infer that $G \in \mathcal F(\Om)$.
We claim that $G=g$ on $\Om \setminus \pi (S \cap \Om^N)$. Assume that there exists $z_0 \in \Om \setminus \pi (S \cap \Om^N)$ such that
$G(z_0)>g(z_0)$. Then we can find a $\de>0$ small enough such that:

\n
(i) $B (a, \de) \cap B (a', \de) =\emptyset$ for $a \ne a' \in \pi(S \cap \Om^N)$;

\n
(ii) $B (z_0, \de) \cap X_\de:=\cup_{a \in \pi(S)} \ov{B}(a, \de)$;

\n
(iii) $X_\de$ is polynomially convex.

To see (iii), observe that since $\pi (S)$ is a finite set in $\C^n$, 
there exists a complex line $l$ passing through $0$ such that the orthogonal projections of
the points of $\pi(S)$ on $l$ are all distinct (take $l$ so that it is not
orthogonal to any of the lines defined by pairs of distinct points in $\pi (S)$).
Then for $\delta$ small enough, the projection of $\pi (X_\delta)$ on $H$ consists of a finite number of pairwise disjoint closed discs in $l$.
Thus, by Kallin's lemma \cite[Theorem 1.6.19]{St}, the set $X_\delta$ is polynomially convex. 

This polynomial convexity allows us to choose a plurisubharmonic function $w$ on $\cn$ such that $-1 \le w<0$ on $\Om$, $w$ is strictly plurisubharmonic on 
$B (z_0, \de)$ and $w=-1$ on $X_\de$.
Using Lemma \ref{pluripotential lemma} and the fact that $(dd^c g)^n$ is supported on $\pi (S)$
we obtain
$$\int_\Om (G-g)^n (dd^c w)^n \le \int_\Om w [(dd^c G)^n-(dd^c g)^n] \le 
-(dd^c G)^n (\pi(S))+(dd^c g)^n(\pi(S)) \le 0.$$
This implies that
$$\int_{B (z_0, \de)} (G-g)(dd^c w)^n=0.$$
So $G=g$ a.e. on $B (z_0, \de)$. Since both functions are plurisubharmonic they must coincide on $B (z_0, \de)$.
We get a contradiction. The proof is complete.

 If $S_k$ is interior convergent then, since $G_{\Om, S_k}$ tends
 to $G$ in $L^1_{loc}$, Theorem \ref{bootstrap's brother} tells us
  that the convergence is locally uniform
on $\ov\Om \setminus \pi (S)$.
 \end{proof*}
 
%proof of corollary about unions
  \begin{proof*} {\it Proof of Proposition \ref{split}.}
  
  By Theorem \ref{bootstrap's brother}, it remains to prove the equivalence of $(b)$ and $(d)$.
  
 $(b) \Rightarrow (d)$
   It suffices to show that $G_{\Om, S'_k}$ converges locally uniformly on $\Om \setminus \pi_{N'} (S')$.
  As in the proof of Theorem \ref{bootstrap's brother},
  we deduce from \eqref{stdest} that for every  $z\in \Om$ and for every $k$,
   $$
   G_{\Om, S_k} (z) \le G_{\Om, S'_k}(z) \le G_{\Om, S_k}(z)-G_{\Om, S''_k}(z).
   $$

Since the $G_{\Om, S''_k}$ are uniformly bounded from below on a small \nhd\  of  $S'_k$
(estimating them from below by sums of one-pole Green functions), by Lemma 
\ref{equicontinuity} we get the desired conclusion.

  $(d) \Rightarrow (b)$ We use the same reasoning as above, together with the following modified form of \eqref{stdest}:
 \begin{equation}
 \label{stdest2}
G_{\Om, S'_k}+G_{\Om, S"_k} \le G_{\Om, S_k} \le \min \{G_{\Om, S'_k}, G_{\Om, S"_k}\}.
 \end{equation}
 To prove the last statement of the theorem, let 
 $$
 \tilde{g}:=\sup\left\{ u\in PSH^-(\Om): u\le g'+O(1), u\le g''+O(1)\right\}.
 $$
Notice that \eqref{stdest2} implies immediately that 
 $g\le \min(g',g'')$, so that $g$ is a candidate for the upper bound which defines
 $\tilde{g}$, therefore $g\le \tilde{g}$. 
 
 To see the reverse inequality, since $g\ge g'+g''$, in a neighborhood of $S'$,
 $g\ge g'-O(1)$, and in a neighborhood of $S''$, $g\ge g''-O(1)$.  So
 $
 \tilde{g} \le \sup\left\{ u\in PSH^-(\Om): u\le g+O(1)\right\}.
 $
 But an easy argument using the maximality of $g$, or the application of \cite[Lemma 4.1]{Ra-Si},
 shows that the latter function is equal to $g$.
 \end{proof*}
 {\bf Remark.} For example, we can apply this result to describe accurately the situations of convergence or non-convergence when $S_k$ consist of $4$ poles and each subset $S'_k, S''_k$ consists of $2$ poles. Indeed, suppose that $S_k=(s_{1,k},\dots,s_{4,k}) \in \Om^4$, that
  $s_{1,k}, s_{2,k} \to a' \in \Om$, and
 $s_{3,k}, s_{4,k} \to a'' \in \Om\setminus\{a'\}$.  Then the Green function
 $G_{S_k,\Om}$ will converge to a limit if and only if the directions
 $[s_{1,k}- s_{2,k}]$ and $[s_{3,k}- s_{4,k}]$ both converge in $\mathbb P^{n-1}\mathbb C$
 \cite[Section 6.1]{Ma-Ra-Si-Th}.

\section{Proof of Theorem \ref{Nivoche} and Proposition \ref{converse}.}

We need a result analogous to Lemma \ref{variation of domain}.

\begin{lemma} \label{continuity Green}
Let $\Om$ be a strictly hyperconvex domain in $\cn ,K$  a  compact subset of  $\Om$ and $N \ge 1$.  Then for every $\de>0,$
 there exists a hyperconvex domain $\Om_\ve$ which contains $\ov{\Om}$ such that
 for every pole set $S \subset K^N$  we have
 $$G_{\Om, S} (z)-\de <G_{\Om_\ve, S} (z) \le G_{\Om, S} (z) \  \forall z \in \Om \setminus S.$$
 \end{lemma}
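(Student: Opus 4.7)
The plan is to take $\Om_\ve := \{z \in U : \rho(z) < \ve\}$ for $\ve > 0$ chosen small in terms of $\delta$ and $N$. Since $\rho$ is continuous plurisubharmonic on $U \supset \ov\Om$ with $\Om = \{\rho < 0\}$, the set $\Om_\ve$ is a bounded hyperconvex domain (with negative continuous plurisubharmonic exhaustion $\rho - \ve$) that strictly contains $\ov\Om$. The upper bound $G_{\Om_\ve, S}(z) \le G_{\Om, S}(z)$ on $\Om \setminus S$ is immediate: the restriction of $G_{\Om_\ve, S}$ to $\Om$ is plurisubharmonic, nonpositive, and has the correct logarithmic singularities, so it lies in the Perron family defining $G_{\Om, S}$.

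For the strict lower bound, I would first establish a uniform boundary estimate. For each $a \in \Om_\ve$, the function $\max\{\rho(z) - \ve,\ \log(|z-a|/R')\}$ (with $R' := \mathrm{diam}(\Om_\ve)$) is a Perron candidate for $G_{\Om_\ve, \{a\}}$, hence $G_{\Om_\ve, \{a\}}(z) \ge \rho(z) - \ve$ on $\Om_\ve$. Summing over the $N$ poles using $G_{\Om_\ve, S} \ge \sum_j G_{\Om_\ve, \{a_j\}}$ gives $G_{\Om_\ve, S}(z) \ge N(\rho(z) - \ve)$, and restricting to $\partial \Om$ (where $\rho = 0$) yields the uniform estimate $G_{\Om_\ve, S} \ge -N\ve$ on $\partial\Om$ for every $S \subset K^N$.

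Now fix $\ve < \delta/(2N)$ and construct an explicit Perron candidate for $G_{\Om_\ve, S}$ by gluing:
$$
w(z) := \begin{cases} \max\{G_{\Om, S}(z) - \delta/2,\ G_{\Om_\ve, S}(z)\} & z \in \Om,\\ G_{\Om_\ve, S}(z) & z \in \Om_\ve \setminus \Om.\end{cases}
$$
On $\Om$, $w$ is the maximum of two plurisubharmonic functions, hence plurisubharmonic; on $\Om_\ve \setminus \ov\Om$, $w$ equals the plurisubharmonic $G_{\Om_\ve, S}$. The decisive step is the gluing across $\partial\Om$: at any $z_0 \in \partial\Om$, $G_{\Om, S}(z_0) - \delta/2 = -\delta/2$ while $G_{\Om_\ve, S}(z_0) \ge -N\ve > -\delta/2$, so by continuity of both functions one has $G_{\Om_\ve, S} > G_{\Om, S} - \delta/2$ in a whole $\Om_\ve$-neighborhood of $z_0$, and consequently $w = G_{\Om_\ve, S}$ there. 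Hence $w$ is plurisubharmonic on all of $\Om_\ve$. It is clearly $\le 0$, and since both $G_{\Om, S} - \delta/2$ and $G_{\Om_\ve, S}$ satisfy $u(z) \le \log|z - a_j| + O(1)$ at each pole $a_j$, so does their maximum. Thus $w$ is a Perron candidate for $G_{\Om_\ve, S}$, forcing $w \le G_{\Om_\ve, S}$ on $\Om_\ve$. Restricting to $\Om$, this reads $\max\{G_{\Om, S} - \delta/2,\ G_{\Om_\ve, S}\} \le G_{\Om_\ve, S}$, whence $G_{\Om, S} - \delta/2 \le G_{\Om_\ve, S}$, giving the strict lower bound $G_{\Om_\ve, S} > G_{\Om, S} - \delta$ on $\Om \setminus S$.

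The main obstacle is verifying that the glued function $w$ is plurisubharmonic across $\partial\Om$: this hinges entirely on the strict inequality $G_{\Om_\ve, S} > G_{\Om, S} - \delta/2$ holding throughout a neighborhood of $\partial\Om$, which is precisely the content of the uniform boundary estimate $G_{\Om_\ve, S} \ge -N\ve$ combined with the choice $\ve < \delta/(2N)$. Once this is secured, everything else reduces to routine checks of the Perron candidate conditions and the reading off of the desired inequality from the relation $w \le G_{\Om_\ve, S}$.
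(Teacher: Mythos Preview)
Your overall architecture matches the paper's: define $\Om_\ve=\{\rho<\ve\}$, establish a uniform lower bound for $G_{\Om_\ve,S}$ on $\partial\Om$, then glue $\max\{G_{\Om,S}-\de,\,G_{\Om_\ve,S}\}$ to $G_{\Om_\ve,S}$ across $\partial\Om$. The gluing step and the upper bound are fine. The gap is in your boundary estimate.

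You assert that $u(z):=\max\{\rho(z)-\ve,\ \log(|z-a|/R')\}$ is a Perron candidate for $G_{\Om_\ve,\{a\}}$. It is not. Near $z=a$ the logarithmic term tends to $-\infty$ while $\rho(z)-\ve\to\rho(a)-\ve$ is a finite negative number, so $u(z)=\rho(z)-\ve$ there; in particular $u$ is bounded near $a$ and fails the requirement $u(z)\le\log|z-a|+O(1)$. Hence the inequality $G_{\Om_\ve,\{a\}}\ge\rho-\ve$ does not follow, and in fact it is false: it would force $G_{\Om_\ve,\{a\}}$ to be bounded below near $a$. Even restricted to $\partial\Om$ the estimate $G_{\Om_\ve,\{a\}}\ge-\ve$ is not generally true; e.g.\ in $\C$ with $\rho(z)=(|z|-1)/M$ one has $\Om_\ve=D(0,1+M\ve)$ and $G_{\Om_\ve,0}=-\log(1+M\ve)\approx-M\ve$ on $|z|=1$.

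The repair, which is exactly what the paper does, is to scale: use $C(\rho-\ve)$ with $C>0$ chosen large enough (this is where the compactness of $K$, giving $\sup_K\rho<0$, is used) that on a fixed ball $B(a,r_0)\Subset\Om$ one has $C(\rho-\ve)<\log(|z-a|/d)$. Then the max equals the logarithmic term on $B(a,r_0)$, so the glued function \emph{does} have the required pole at $a$ and is a legitimate candidate. The price is that the boundary estimate becomes $G_{\Om_\ve,a}\ge-C\ve$ on $\partial\Om$, with $C$ depending on $K,\Om$; one then takes $\ve$ small relative to $\de/(CN)$ rather than $\de/(2N)$. With this correction your argument goes through and coincides with the paper's.
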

 
  \begin{proof}
 This lemma in the case where $N=1$ and $K$ is a single point was  essentially proved by Nivoche  \cite[Proposition 2.3]{Ni}. We will use an idea from Theorem 4.3 in \cite{De1}.
For $\ve>0$, we let $\Om_\ve$ be the connected component of $\{z \in U: \rho<\ve\}$ that contains $\Om.$
By choosing $\ve$ small enough we may assure that $\Om_\ve$ is hyperconvex and relatively compact in $U$.
We claim  that if $\ve>0$ is sufficiently small then
$$G_{\Om_\ve, a}>-\de/N, \ \forall  z \in \partial \Om, \forall a \in K.$$
 Choose $r_0>0$ so that $V:=\cup_{a \in K} B (a, r_0)$ is relatively compact in $\Om.$
Let $d$ be the diameter of $U$. Choose a constant $C>0$ big enough such that
$$C \sup_V \rho <\de/N+\log (r_0/d).$$
Choose $\ve>0$ such that
$$C\sup_V \rho-\log (r_0/d) <C\ve<\de/N.$$
Consider the function
\begin{equation}
\hat \rho (z):= \begin{cases} \max \{C(\rho (z)-\ve), \log \frac{\vert z-a\vert}d \}  &  z \in \Om_\ve \setminus B (a, r_0) \\
\log \frac{\vert z-a\vert}d & z \in B (a, r_0).
\end{cases}
\end{equation}
By the choices of $C, \ve$ we can check that $\hat \rho \in PSH^{-} (\Om_\ve)$ and $\hat \rho$ has logarithmic singularity at $a$.
It follows that for every $z \in \partial \Om$ we have
$$G_{\Om_\ve, a} \ge -C\ve >-\de/N.$$
This proves the claim.
So for every $S \subset K^N$ we have
$$G_{\Om_\ve, S} \ge \sum_{a \in S} G_{\Om_\ve, a}>-\de.$$
Fix $S \subset K^N$, we set $\hat G :=G_{\Om_\ve, S}$ on $\Om_\ve \setminus \Om$ while
 $\hat G:=\max\{G_{\Om_\ve, S}, G_{\Om, S}-\de\}$ on $\Om$. Then $\hat G \in PSH^{-} (\Om_\ve)$ has logarithmic singularities at $S$. Therefore $\hat G \le G_{\Om_\ve, S}$. The proof is then easily concluded.
 \end{proof}
 
Theorem \ref{Nivoche} follows essentially from Theorem \ref{bootstrap's brother} and the following lemma which may be of independent interest.

\begin{lemma} \label{Aytuna}
 Let $\Om$ be a bounded hyperconvex domain in $\cn, S \subset \Om^N$,
 and $K$ be a compact subset of $\Om \setminus S$.  
Then  for every $\ve>0$ and every relatively compact sub-domain
$ \Om'$ such that $K \cup S \subset \Om' \subset \Om$
 we can find $p \ge 1, \Om' \subset \Om^\ve \Subset \Om$ and a collection of holomorphic functions
$f_1, \cdots, f_{3n} \in \I^p_{\Om, S}$ having the following properties:

\n
$(i) \Vert f_j\Vert_{\ov {\Om^\ve}} <1$ for every $1 \le j \le 3n;$

\n
$(ii)$ The following estimate holds on $K$
$$G_{\Om, S}-\ve <\frac1{p} \max \{\log \vert f_1 \vert, \cdots, \log \vert f_{3n}\vert \} <G_{\Om, S}+\ve.$$
\end{lemma}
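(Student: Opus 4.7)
The strategy is to adapt Demailly's $L^2$-approximation of plurisubharmonic functions by logarithms of holomorphic sections, and then reduce the number of functions down to $3n$ by a generic-projection argument. Set $\varphi := G_{\Om, S}$, and for each integer $p \ge 1$ consider the weighted Bergman space
$$
\mathcal{H}_p := \Bigl\{ f \in \mathcal{O}(\Om) : \|f\|_p^2 = \int_\Om |f|^2 e^{-2p\varphi}\,dV < \infty \Bigr\}.
$$
The logarithmic poles of $\varphi$ force $\mathcal{H}_p \subset \I^{p-n+1}_{\Om, S}$ by an elementary integrability check, so after replacing $p$ by $p+n$ we may assume $\mathcal{H}_p \subset \I^p_{\Om, S}$. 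The Ohsawa--Takegoshi extension theorem combined with the sub-mean-value inequality (the content of Demailly's $L^2$ approximation, cf.~\cite{De1}) yields, for the diagonal Bergman kernel $B_p(z) = \sum_\alpha |e_\alpha(z)|^2$ of an orthonormal basis $(e_\alpha)$ of $\mathcal{H}_p$, the uniform estimate $\psi_p(z) := \tfrac{1}{2p}\log B_p(z) \to \varphi(z)$ on $K$. Fix $p$ so large that $|\psi_p - \varphi| < \ve/4$ on $K$.

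Since $K \Subset \Om \setminus S$, the series defining $B_p$ converges uniformly on $K$, so there is a finite truncation $e_1,\dots,e_M$ with $\bigl|\tfrac{1}{2p}\log \sum_{j=1}^M |e_j|^2 - \psi_p\bigr| < \ve/4$ on $K$. Choose $\Om^\ve$ with $\Om' \Subset \Om^\ve \Subset \Om$; the weighted $L^2$ bound dominates the sup-norm on $\ov{\Om^\ve}$, so the $e_j$ can be normalized so that $\|e_j\|_{\ov{\Om^\ve}} \le 1$, at the cost of an $O(1/p)$ additive correction to $\tfrac{1}{p}\log|e_j|$.

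The crucial step is the passage from $M$ to exactly $3n$ functions. Consider the holomorphic map $F := (e_1,\dots,e_M) : \Om \to \mathbb{C}^M$ and the set $\Sigma := \{F(z)/\|F(z)\| : z \in K\}$, which lies in the unit sphere of $\mathbb{C}^M$ and has real Hausdorff dimension at most $\dim_{\mathbb R} K \le 2n$. For a complex-linear surjection $L = (L_1,\dots,L_{3n}) : \mathbb{C}^M \to \mathbb{C}^{3n}$ of operator norm at most $1$, the kernel $\ker L$ has real codimension $6n$ in $\mathbb{C}^M$; since $2n < 6n$, a transversality / Sard argument shows that for generic $L$ one has $\Sigma \cap \ker L = \emptyset$. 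Compactness of $\Sigma$ then yields $c := \min_{v \in \Sigma} \max_j |L_j(v)| > 0$, so the functions $f_j := L_j \circ F$ lie in $\I^p_{\Om,S}$, satisfy $\|f_j\|_{\ov{\Om^\ve}} \le 1$, and obey $c\|F\| \le \max_j |f_j| \le \|F\|$ on $K$.

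Assembling the pieces, $\tfrac{1}{p}\max_j \log|f_j|$ approximates $\varphi$ on $K$ with total error bounded by $\ve/4 + \ve/4 + \tfrac{|\log c| + \log(3n)}{2p} + O(1/p)$; an additional increase of $p$ (chosen \emph{after} $L$ is fixed) makes this $<\ve$. The main obstacle is this final ordering of limits: the constant $c$ from the generic projection depends on $L$ and thereby, a priori, on $M$ and $p$. What rescues the argument is that $\dim_{\mathbb R}\Sigma \le 2n$ independently of $p$, which is exactly what accounts for the sharp count $3n$ appearing in the statement.
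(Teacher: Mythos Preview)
Your Ohsawa--Takegoshi / Bergman kernel step is essentially the same as the paper's first stage (producing finitely many $f_1,\dots,f_m\in \I^p_{\Om,S}$ with the two-sided estimate $(ii')$), and that part is fine.

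The genuine gap is in the reduction from $M$ functions to $3n$. You set $f_j=L_j\circ F$, obtain $c\|F\|\le\max_j|f_j|\le\|F\|$ on $K$, and then the error picks up a term $\frac{|\log c|}{p}$. You then write ``an additional increase of $p$ (chosen \emph{after} $L$ is fixed) makes this $<\ve$.'' But $L$ was chosen for a specific $\Sigma\subset S^{2M-1}$, and $\Sigma$, $M$, and $F$ all depend on $p$ through the orthonormal basis $(e_\alpha)$ of $\mathcal H_p$; as soon as $p$ changes, $L$ no longer even has the right source dimension, and $c$ has no reason to persist. Your observation that $\dim_{\mathbb R}\Sigma\le 2n$ independently of $p$ guarantees, for each fixed $p$, the \emph{existence} of a generic $L$ with $c>0$; it says nothing about a lower bound on $c$ in terms of $p$, which is what you need to make $\frac{|\log c|}{p}$ small. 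To push your route through one would need a quantitative transversality estimate (e.g.\ controlling the $2n$-dimensional Hausdorff measure of $\Sigma$ polynomially in $p$ via Bergman kernel asymptotics), and that is real additional work not present in the proposal.

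The paper avoids this circularity by decoupling the two scales. After obtaining $f_1,\dots,f_m$ with a \emph{fixed} $p$, it perturbs to $h_j=f_j+\eta_j g_j^p$ (with $g_1,\dots,g_n$ cutting out $S$, \`a la Eisenbud--Evans) so that no $(3n{+}1)$-tuple of the $|h_j|$ can be simultaneously equal off $S$; it then forms the elementary symmetric combinations
\[
h_r^{(s)}=\sum_{j_1<\cdots<j_r}(h_{j_1}\cdots h_{j_r})^{s(3n)!/r},\qquad 1\le r\le 3n,
\]
and shows $\frac{1}{ps(3n)!}\max_r\log|h_r^{(s)}|\to \frac{1}{p}\max_j\log|h_j|$ uniformly on $K$ as $s\to\infty$ (this is the Aytuna--Zakharyuta trick). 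The point is that the integer power $p'=ps(3n)!$ attached to the final $3n$ functions is driven to infinity by the \emph{independent} parameter $s$, while the original approximation at level $p$ stays fixed; there is no feedback of the projection constant into $p$.
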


 \begin{proof}
 Choose $\Om^\ve$ such that $ \Om' \Subset \Om^\ve$ and that
 the following estimates hold on $\Om^\ve$
 $$G_{\Om^\ve, S} \le G_{\Om, S}+\ve.$$
 Next, we will prove that there exist an integer $p \ge 1$ and  holomorphic functions
$f_1,\cdots, f_m \in \I^p_{\Om, S}$ having the following properties:

$(i') \Vert f_j\Vert_{\ov {\Om^\ve}} <1,$ for every $1 \le j \le m.$

$(ii')$ The following estimates hold on $K$
$$
G_{\Om, S}-\ve/2 <\frac1{p} \max \{\log \vert f_1 \vert, \cdots, \log \vert f_m\vert \} 
<G_{\Om, S}+\ve/2.
$$

To this end, following an idea of Demailly as in \cite{Ni}, we will use the Ohsawa-Takegoshi extension theorem in the  following special form:
{\it Let $\Om$ be a bounded pseudoconvex domain, $\va \in PSH (\Om)$  and $z \in \Om$. Then for every complex  number $a$, we can find a holomorphic function $f$ in $\Om$ such that $f(z)=a$ and
$$
\int_\Om \vert f(w)\vert^2 e^{-\va (w)} dw \le c_{\Om, n} \vert a\vert^2 e^{-\va (z)},
$$
where $c_{\Om, n}$ depends only on the dimension $n$ and the diameter of $\Om.$
}

Let $r>0$ be the distance between $\partial \Om$ and $\partial \Om^\ve$
and $A>0$ be a constant which is smaller than the volume of the ball with radius $r$ in $\mathbb C^n.$
Choose an integer $p$ so large such that
 $$\ve >-\frac{2}{p} \log (\frac{A}{c_{\Om, n}}).$$
We apply the theorem to $\Om, z_0 \in K, \va =2p G_{\Om, S}$ and
$$a:=\frac{\sqrt{A}e^{pG_{\Om, S} (z_0)}}{\sqrt{c_{\Om, n}}}.$$
 Thus we can find a holomorphic function $f$ on $\Om$ such that
$$\int_{\Om} \vert  f(w)\vert^2 e^{-2pG_{\Om, S} (w)} dw \le A, f(z_0)=a.$$
The first inequality forces $f \in \I^p_{\Om, S}$,
the latter relation and the choice of $p$ implies that
$$\frac1{p} \log \vert f(z_0)\vert >G_{\Om, S} (z_0)-\ve/2.$$
On the other hand, since $G_{\Om, S}<0$ on $\Om$ we also get
$\int_{\Om} \vert f(w)\vert^2 dw<A.$
By the sub-mean inequality applied to the subharmonic functions  $\vert f\vert^2$ over balls of radius $r$ with centers lying on $\partial \Om^\ve,$
we conclude easily from this inequality that $\Vert f\Vert_{\over {\Om^\ve}}<1.$
By a standard compactness argument  we get a finite number of holomorphic functions
$f_1,\cdots, f_m \in \I^p_{\Om, S}$ satisfying $(i')$ and the left inequality in $(ii')$. For the other inequality, it suffices to note that 
by the choice of $\Om^\ve$
$$\frac1{p} \max \{\log \vert f_1 \vert, \cdots, \log \vert f_m\vert \} \le G_{\Om^\ve, S} \le G_{\Om, S}+\ve/2.$$
We are done.

Now it is clear that the proof is complete in the case where $m \le 3n$  by putting together  $(i')$ and $(i'')$ (we can take trivially $f_{m+1}=\cdots=f_{3n}=0$ in this case).
Suppose that $m \ge 3n+1$,
following an idea in the proof  of \cite[Theorem 1]{Ay-Za}, we proceed as follows.
According to \cite[Theorem 1]{Ei-Ev}, there exist polynomials $g_1, \cdots, g_n$ in $\C^n$ such that
 $$
 S=\{z \in \Om: g_1 (z)=\cdots=g_n (z)=0\}.
 $$
Choose polynomials $g_{n+1}, \cdots, g_m$ in $\C^n$ such that any subset of $n$ elements in the collection
$\{g_1, \cdots, g_m\}$ has $S$ as their common zero set.
This can be done by taking $g_{n+1}, \cdots, g_m$ to be sufficiently generic linear combinations of $g_1, \cdots, g_n.$
For $\eta_1, \cdots, \eta_m \in \C,$ we define
$$
h_j:= f_j+\eta_j g_j^p, 1 \le j \le m.
$$
Obviously $h_j \in \I^p_{\Om, S}$,
the key step is to show that we can choose
$\eta_1, \cdots, \eta_m$ so small such that the collection $h_1, \cdots, h_m$ has the
the following additional properties:

\n
$(a) \Vert h_j\Vert_{\ov \Om^{\ve}<1}$ for every $1 \le j \le m$;

\n
$(b) \{z \in \Om \setminus S: \vert h_{j_1} (z) \vert=\cdots=\vert h_{j_{3n+1}} (z)\vert \}=\emptyset$ for every
 $(3n+1)-$ tuple $(j_1, \cdots, j_{3n+1}) \subset  \{1, 2, \cdots, m\};$

\n
$(c) \Vert G_{\Om, S} - w(z)\Vert_K <\ve/4,$ where $w(z):=\frac1{p} \max\{\log \vert h_1 (z)\vert, \cdots, \log \vert h_m (z)\vert\}.$

\n
By the construction of $h_1, \cdots, h_m,$  the properties $(a)$ and $(c)$ are always verified if $\eta_1, \cdots, \eta_m$ are small enough.
For the property $(b)$, since the set of $(3n+1)-$ tuples $(j_1, \cdots, j_{3n+1}) \subset  \{1, 2, \cdots, m\}$ is finite,
for simplicity of notation, it suffices to treat the case where $j_1=1, \cdots, j_{3n+1}=3n+1.$
For $1 \le j \le 3n+1$, denote by $H_j$ the algebraic hypersurface $H_j: =\{z \in \Om: g_j (z)=0\}.$ Let
$$
\Om_j :=(\Om \cap H_1 \cap \cdots \cap H_{j-1})  \setminus (H_j \cup \cdots \cup H_{3n+1}),
$$
$$
\De_j :=\{ (w_j, \cdots, w_{3n+1}) \subset \C^{3n-j+2}: \vert w_j\vert=\cdots=\vert w_{3n+1}\vert\}.
$$
Then by the choice of $g_j$  we have
$$
\Om \setminus S =\bigcup_{1 \le j \le n-1} \Om_j.
$$
It is also easy to check that
$$
\text{dim}_{\mathbb R} (\Om_j) =2(n-j+1), \text{dim}_{\mathbb R} (\De_j)= 3n-j+3.
$$
Consider the map $\Phi_j: \Om_j \times \De_j \to \mathbb C^{3n-j+2}$ defined as
$$
(z, w) \mapsto \Big (\frac{w_j-f_j (z)}{g_j (z)^p}, \cdots, 
\frac{w_{3n+1}-f_{3n+1}(z)}{g_{3n+1} (z)^p}\Big).
$$
Then $\Phi_j$ is a $\mathcal C^\infty$ differentiable map from a real manifold of dimension $5n-3j+3$ into a real manifold of higher dimension $6n-2j+4.$
This implies that  $\Phi_j (\Om_j \times \De_j)$ is a of Lebesgue measure $0$ in $\mathbb C^{3n-j+2}$
for every $j \in \{1, \cdots, n\}.$
Hence for every $\de>0$, there exists $\eta_1, \cdots, \eta_{3n+1}$
such that $\vert \eta_j \vert<\de $ for every $1 \le j \le 3n+1$ and
$$
(\eta_j, \cdots, \eta_{3n+1}) \in \mathbb C^{3n-j+2} \setminus \Phi_j (\Om_j \times \Delta_j), \ \forall 1 \le j \le n-1.
$$
It implies  our claim easily.

Now for every $1 \le r \le 3n$ and $s \ge 1$, we set
$$
h^{(s)}_r (z):=\sum_{1 \le j_1<\cdots<j_r \le m} (h_{j_1} (z)\cdots h_{j_r} (z))^{s\frac{(3n)!}{r}}.
$$
It follows from $(a)$ that $\Vert h^{(s)}_r \Vert_\Om<A$, where $A$ depends only on $n, m$.
Moreover,  each $h^{(s)}_r$  belongs to  $\I^{p'_s}_{\Om, S},$ where $p'_s:=ps(3n)!$
Consider the sequence of functions
$$w_s (z):=\frac1{p'_s} \max\{\log \vert h^{(s)}_1 (z)\vert, \cdots, \log \vert h^{(s)}_{3n} (z)\vert\}.$$
By the same reasoning as in \cite[p. 1735]{Ay-Za}, we will prove that there exists $s_0$ such that
$$\Vert G_{\Om, S} - w_{s_0}\Vert_K<\ve/2.$$
To this end, it suffices to approximate uniformly on $K$ the function $w$ by $w_s$ for $s$ large enough.
We will do the lower bound for $w_s$, the upper bound is easier.
Indeed, fix a point $z_0 \in K$. Then, by the above construction  there exists $r=r(z_0) \le 3n$ and a $r$ tuple $J(z_0):=(j_1, \cdots, j_r)$ such that
$1 \le j_1<\cdots<j_r \le 3n$  and for any $i \not \in J(z_0)$ we have
$$
\la:=\vert h_{j_1} (z_0)\vert=\cdots=\vert h_{j_r} (z_0)\vert >\vert h_i (z_0)\vert.
$$
Choose a small \nhd\ $U_{z_0}$ of $z_0$ such that
$$
d(z_0):=\max_I \und {\xi \in U_z}{\sup} \Big \{\Big \vert \frac{h_{i_1} (\xi) \cdots h_{i_r} (\xi)}{h_{j_1} (\xi) \cdots h_{j_r} (\xi)}\Big \vert\Big \}<1,
$$
where the maximum is taken over all $r-$ tuples $I= (i_1, \cdots, i_r)$ with $I \ne J$.
By continuity we may assume that $U_{z_0}$ satisfies the additional properties
$$
(1-\si) \la <\vert h_j (\xi)\vert, \vert w(z_0)-w(\xi)\vert<\si,  \forall \xi \in U_{z_0}, \forall j \in J(z_0).
$$
Here $\si \in (0,1)$ will be chosen later on.
Then for $\xi \in U_{z_0}$ we obtain the following  estimates
$$\vert h^{(s)}_r (\xi)\vert \ge \vert h_{j_1}(\xi) \cdots h_{j_r} (\xi)\vert^{\frac{s(3n)!}{r}} \Big (1-\sum_{I \ne J}\Big \vert  \frac{h_{i_1} (\xi) \cdots h_{i_r} (\xi)}{h_{j_1} (\xi) \cdots h_{j_r} (\xi)}\Big \vert^s \Big )^{\frac{(3n)!}{r}}$$
$$\ge ((1-\si) \la)^{s(3n)!} (1-2^md(z)^s)^{\frac{(3n)!}r}.$$
Then for $\xi \in U_{z_0}$ and $s >> 1$ we get
$$w_s (\xi) \ge \frac1{ps(3n)!} \log \vert h^{(s)}_r (\xi)\vert \ge \frac1{p}(\log (1-\si)+\log \la)+\frac1{rps}\log (1-2^{m}d(z_0)^s).$$
On the other hand, we also have
$$w(z_0)=\frac1{p} \log \la.$$
Thus, by shrinking $U_{z_0}$ if necessary, we can find $\si \in (0, 1)$ and an integer $s(z_0) \ge 1$ such that
$$w_s (\xi) \ge w(\xi) -\ve/4, \xi \in U_{z_0}, \  \forall s \ge s(z_0).$$
Now a standard compactness argument gives an integer $S(K)$ such that if $s \ge s(K)$ and $\xi \in K$ then
$$w_s (\xi) \ge w(\xi)-\ve/4 \ge G_{\Om, S} (\xi) -\ve/2.$$
The proof is complete.
\end{proof}

\begin{proof*}{\it Proof of Theorem \ref{Nivoche}.}

 By Lemma \ref{continuity Green}, we can find $k_0 \ge 1$ and a hyperconvex domain $\Om_\ve$ containing $\ov{\Om}$ such that the following inequality holds on $\Om \setminus S_k$
for {\it every} $k \ge k_0$
$$G_{\Om, S_k}-\ve/2<G_{\Om_\ve, S_k}.$$
By Theorem \ref{bootstrap's brother}, there exists $k_1$ such that if $k \ge k_1$ then
$$
\Vert G_{\Om, S_k}-g\Vert_K <\ve/2.
$$
Using the above inequalities together with Lemma \ref{Aytuna} (applied to $\Om:=\Om_\ve, S:=S_k$ and $\Om' :=\Om$, we can find $p$ and  holomorphic functions
$f_{1, k}, \cdots, f_{3n, k} \in \I^p_{\Om_\ve, S}$ satisfying the properties $(i), (ii).$
Finally,  by the same perturbation argument i.e., by adding small enough multiples of $g_j^p$, as was done in the proof of Lemma \ref{Aytuna}, we may achieve $(iii)$.

We sketch  the argument. For $1 \le j \le 3n$, denote by $H_j$ the algebraic hypersurface $H_j: =\{z \in \Om_\ve: g_j (z)=0\}.$ Let
$$
\Om_j :=(\Om_\ve \cap H_1 \cap \cdots \cap H_{j-1})  \setminus (H_j \cup \cdots \cup H_{3n+1}).
$$
Consider the map  $\Psi_{j,k}: \Om_j \mapsto \C^{3n-j+1}$ defined as
$$
z \mapsto \Big (-\frac{f_{j,k} (z)}{g_j  (z)^p}, \cdots, -\frac{f_{3n, k} (z)}{g_{3n} (z)^p} \Big).
$$
By counting the dimensions on both spaces we can get 
arbitrarily small $(\ve_{1,k}, \cdots, \ve_{3n,k})$ 
such that $\left( \ve_{j,k}, \cdots, \ve_{3n,k}\right) \notin \Psi_{j,k}(\Om_j)$,
for $1\le j \le 3n$. Then
it suffices to set $\tilde f_{j,k} := f_{j,k} + \ve_j g_j^p$, $1\le j \le 3n$. 
\end{proof*}

%proof of partial converse
\begin{proof*}{\it Proof of Proposition \ref{converse}.}

 Let $g \in PSH^{-} (\Om)$ be an arbitrary limit point of $G_{\Om, S_k}$ in the $L^1_{loc} (\Om)$ topology. By definition of the multipole Green function and $(ii)$ we obtain
$G_{\Om, S_k} \ge u_k$ for every $k.$ It follows that $g \ge u$ a.e. on $\Om$. Since both functions are plurisubharmonic, we have
$g \ge u$ everywhere on $\Om.$ In particular $g \in \mathcal F (\Om)$.
By Theorem \ref{bootstrap's brother}, $(dd^c g)^n$ is supported on $\pi (S)$ and
$$\int_\Om (dd^c g)^n  =N.$$
 Now we apply Lemma \ref{pluripotential lemma} (a) to obtain
 $$(dd^c u)^n (\{a\}) \ge (dd^c g)^n (\{a\}) \ \forall a \in \pi (S).$$
This implies the following chain of inequalities
 $$N \ge \int_{\Om} (dd^c u)^n \ge \int_{\pi (S)} (dd^c u)^n \ge \int_{\pi (S)} (dd^c g)^n = N.$$
 This forces
 $$(dd^c u)^n=(dd^c g)^n=0 \  \text{on}\  \Om \setminus \pi (S),$$
and  $(dd^c u)^n (\{a\}) = (dd^c g)^n (\{a\}) \ \forall a \in \pi (S).$
 In other words, $(dd^c u)^n=(dd^c g)^n$ on $\Om.$
 Putting all this together and use Lemma \ref{pluripotential lemma} (ii) we have in fact $u=g$ on $\Om$.
 This implies that the whole sequence $\{G_{\Om, S_k}\}$ converges to $u$ in $L^1_{loc}$. Applying again Theorem \ref{bootstrap's brother}, we conclude that the convergence occurs locally uniformly away from $\pi(S)$.
\end{proof*}

 \vskip1cm

\end{document}